\newtheorem{thm}{Theorem}
\newtheorem{lem}[thm]{Lemma}
\newtheorem{cor}[thm]{Corollary}
\newtheorem{prop}[thm]{Proposition}
\theoremstyle{remark}
\newtheorem{rem}[thm]{Remark}
\numberwithin{equation}{section}
\numberwithin{thm}{section}
\newcommand{\R}{\mathbb{R}}
\newcommand{\M}{\mathbb{M}}
\newcommand{\N}{\mathbb{N}}
\newcommand{\A}{\mathcal{A}}
\newcommand{\ex}{\mathrm{e}}
\newcommand{\hess}{\mathrm{Hess}}
\newcommand{\jac}{\mathrm{Jac}}
\newcommand{\Ric}{\mathrm{Ric}}
\newcommand{\dM}{\partial_\infty M}
\newcommand{\Vol}{\textup{Vol}}
\newcommand{\vol}{\textup{vol}}
\newcommand{\tr}{\textup{tr}}
\newcommand{\id}{\textup{Id}}
\newcommand{\ddt}{\frac{\partial}{\partial t}}
\begin{document}

  \begin{center}
    \Large
    \textbf{On asymptotically harmonic manifolds \\ of negative curvature}\\
    \normalsize
    \vspace{10mm}
    Philippe CASTILLON and Andrea SAMBUSETTI
  \end{center}

  \small
  \noindent\textbf{Abstract:} We study asymptotically harmonic manifolds of
  negative curvature, without any cocompactness or homogeneity assumption.
  We show that asymptotic harmonicity provides a lot of  information  on the
  asymptotic geometry of these spaces: in particular, we determine the volume
  entropy, the spectrum and the relative densities of visual and harmonic
  measures on the ideal boundary. Then, we prove an asymptotic analogue of the
  classical mean value property of harmonic manifolds, and we characterize
  asymptotically harmonic manifolds, among Cartan-Hadamard spaces of strictly
  negative curvature, by the existence of an asymptotic equivalent
  $\tau(u)\ex^{Er}$ for the volume-density of geodesic spheres (with $\tau$
  constant in case $DR_M$ is bounded). Finally, we show the existence of a
  Margulis function, and explicitly compute it, for all asymptotically
  harmonic manifolds.

  \vspace{5mm}
  \noindent\textbf{Mathematics Subject Classifications (2010):} primary 53C20.

  \vspace{5mm}
  \noindent\textbf{Key words:} Asymptotically harmonic manifolds, spectrum,
  asymptotic geometry.

  \normalsize
  \vspace{5mm}
%
%
%
%
\section*{Introduction}

{\em Harmonic manifolds} are  those Riemannian manifolds whose geodesic spheres
have constant mean curvature; equivalently, such that the volume density
function, in normal coordinates at any point $x$,  only depends on the distance
$d(x, \cdot)$. Another equivalent condition is that the mean-value property
$$
F(x_0) = \frac{1}{\vol (S_{x_0}(R))} \int_{S_{x_0}(R)} F(x) dv_{S_{x_0}(R)}
$$
holds for all harmonic functions $F$ on $M$ (cf. \cite{Besse}).

In 1944, A. Lichnerowicz conjectured (and proved in dimension 4) that the rank
one symmetric spaces (denoted ROSS, in the sequel) are the only harmonic
manifolds. If this was proved to be true for compact simply connected manifolds
(cf. \cite{Szabo})  and for negatively curved Cartan-Hadamard manifolds admitting
compact quotients (cf. \cite{Be-Co-Ga} section 9.C), E. Damek and F. Ricci
constructed harmonic homogeneous manifolds which are not ROSS
(cf. \cite{Damek-Ricci}).
Since then, J.~Heber proved that Damek-Ricci spaces and ROSS are the only
homogeneous harmonic manifolds (cf. \cite{Heber}), and then further relations
between harmonicity, volume growth and Gromov hyperbolicity have been studied
(cf. \cite{Ranjan-Shah,Knieper2}).

In several of these works, an asymptotic version of harmonicity naturally appears
(cf. \cite{Foulon-Labourie,Heber}) : a Cartan-Hadamard manifold $M$ is {\em
asymptotically harmonic} if its horospheres have constant mean curvature $h$.
This notion was introduced by F.~Ledrappier in \cite{Ledrappier}, and was mainly
studied in the {\em cocompact} case (i.e. when the space admits  compact
quotients). F. Ledrappier  proved that, within these spaces, asymptotic
harmonicity is equivalent to the condition $\inf \sigma(\Delta) = \frac{E^2}{4}$
(where $\sigma (\Delta)$ denotes the spectrum of the Laplacian of $M$, and $E$
its volume-entropy); moreover, he showed that if  $M$ is asymptotically harmonic,
then $E=nh$ and $\inf \sigma(\Delta) = \frac{n^2h^2}{4}$. 
It was then proved  (as a consequence of the work of Y. Benoist, P. Foulon and
F.~Labourie on the geodesic flow of asymptotically harmonic spaces
\cite{Be-Fo-La,Foulon-Labourie} and the characterization of locally symmetric
spaces by their volume entropy due to G. Besson, G. Courtois and S. Gallot 
\cite{Be-Co-Ga})  that  the ROSS are the only asymptotically harmonic manifolds
among cocompact, negatively curved Cartan-Hadamard spaces.
On the other hand, in \cite{Connell} necessary and sufficient conditions are
given in order that a {\em homogeneous}, negatively curved Cartan-Hadamard
manifold is asymptotically harmonic; however, as far as the authors know, the
problem whether any asymptotically harmonic manifold is a ROSS or a Damek-Ricci
space is still open in this class.
Recently, it was also proved that, in dimension 3, the only asymptotically
harmonic Cartan-Hadamard manifold of strictly negative curvature  is the
hyperbolic space (cf. \cite{He-Kn-Sh,Schroeder-Shah}).

The aim of this paper is to show that, for Cartan-Hadamard manifolds of strictly
negative curvature of any dimension, even  without any cocompactness or
homogeneity assumption, asymptotic harmonicity provides a lot of information  on
the asymptotic geometry. 
In view of \cite{Ledrappier}, we are naturally  interested  in the volume
entropy, the spectrum and the relations between visual and harmonic measures on
the ideal boundary of a general asymptotically harmonic manifold.
In particular,  in section \S3, we show rigidity of Cartan-Hadamard
asymptotically harmonic manifolds under suitable curvature bounds ({\em Corollary
\ref{cor_rigidity}}),  we determine the volume entropy and the spectrum (cf. {\em
Theorems \ref{thm_entropy} \& \ref{thm_spectrum}})  and, when the curvature is
negatively pinched, we find sharp upper and lower bounds for the volume-growth of
the horospheres ({\em Theorem \ref{thm_growth}} and ff. {\em Remarks
\ref{rem_optimal} \& \ref{rem_lower}}). Moreover, we prove  an asymptotic
analogue of the classical mean-value property holding on harmonic manifolds ({\em
Theorem \ref{thm_meanvalue}}). 
In section \S4, we characterize asymptotically harmonic manifolds  as  those
manifolds whose volume form, in normal coordinates, is asymptotically equivalent
to a function $\tau(u)\ex^{ER}$, for some positive function $\tau$ on $SM$ ({\em
Theorem \ref{thm_characterization}}); then, we show  that the function $\tau$ is
constant if $DR_M$ (the derivative of the Riemann tensor) is bounded ({\em
Proposition  \ref{prop_tau-properties}(ii)}). \linebreak 
In \S5  we prove the existence of a Margulis function
({\em Proposition \ref{propmargulis}}), we explicitly compute it for all
asymptotically harmonic manifolds, and we find the relative densities of visual
and harmonic measures on the ideal boundary  
({\em Proposition \ref{prop_densities}});  we also show  that they coincide when
$DR_M$ is bounded. This result is to compare to what is known in the cocompact
and homogeneous cases, where  coincidence of two of the three natural families of
measures  on the ideal boundary  (visual, harmonic and Patterson-Sullivan
measures) forces, respectively in the two cases, symmetry  and asymptotic
harmonicity of the manifold (cf. \cite{Ledrappier, Ledrappier2, Yue, Yue2,
Connell}); unfortunately, a similar characterization  for general asymptotically
harmonic Cartan-Hadamard manifolds is still missing.
 
The main tools we use are a comparison lemma for the second fundamental forms of
two tangent spheres, which is proved in
section 2, and the Riccati equation. The first section is devoted to notations
and  preliminary results.

\vspace{2mm}

{\em We thank professor S. Gallot for his suggestions and encouragement, and
professor G. Knieper for explaining us the expression of the function $\tau$ in
terms of Jacobi tensors.}

%
%
%
%
\section{Notations}

Unless otherwise stated, throughout all the paper $(M, g)$ will always be a
{\em Cartan-Hadamard manifold} (CH-manifold, for short) of dimension $n+1$, i.e.
a complete, simply connected Riemanniann manifold with nonpositive curvature.

The {\em ideal boundary} of $M$, denoted $\dM$, is the set of equivalent classes
of geodesic rays, $\gamma$ and $\sigma$ being equivalent if
$\sup\{d(\gamma(t),\sigma(t))\ |\ t\ge 0\}<\infty$ (cf. \cite{Bridson-Haefliger}
definition II.8.1). For $\xi\in\dM$, $\lim_{t\to+\infty}\gamma(t)=\xi$ will mean
that $\xi$ is the equivalence class defined by $\gamma$. The cone topology
turn $M\cup\dM$ into a compact manifold with boundary
(cf \cite{Bridson-Haefliger} definition II.8.6).

For $\xi\in\dM$ and $x\in M$, the {\em Busemann function} $b_{\xi,x}$, centered
at $\xi$ and vanishing at $x$, is defined by
$b_{\xi,x}(y)=\lim_{t\to+\infty}(d(y,\gamma(t))-t)$, where $\gamma$ is the unique
geodesic such that $\gamma(0)=x$ and $\lim_{t\to+\infty}\gamma(t)=\xi$. Two
Busemann functions centered in the same point at infinity differ from a constant ;
in many situations, we only need to know the Busemann functions up to a constant,
and we shall note $b_\xi$ some Busemann function centered in $\xi$.
Busemann functions are Lipschitz and, on CH-manifolds, they are at least $C^2$,
cf. \cite{Heintze-Hof}. 

The {\em horospheres} centered in $\xi\in \dM$ are the level hypersurfaces of
$b_\xi$: $H_{\xi} (t) = \{ x \in M \, | \, b_{\xi} (x) = t\}$; we shall also use
the convenient notation $H_{\xi} (x)$ for the horosphere centred at $\xi$ and
passing through $x$. As the Busemann functions are limit of distance functions,
the horospheres centered in $\xi$ are (locally) limit of spheres whose centers
tends to $\xi$. Since $|\nabla b_\xi|=1$  and the gradient lines of $b_\xi$ are
the geodesics $\gamma$ such that $\lim_{t\to-\infty}\gamma(t)=\xi$, we can define 
the {\em inner} unit vector field of horospheres centred at $\xi$ as 
$\nu = - \nabla b_\xi$   (i.e.  $\nu$ points towards the center $\xi$ of the
horosphere).

For a general hypersurface $N$ of $M$,   $\vec{\A}^N$ denotes its {\em second}
(vector valued)  {\em fundamental form} ; that is, for
$u,v\in T_xN$, $\vec{\A}^N (u,v)$ is the component of $D^M_uV$ normal   to $N$,
where $D^M$ is the connection of $M$ and $V$ extends $v$ in a neighborhood of
$x$. 
Associated to the choice of a unit normal vector field $\nu$ to $N$ we then have
the {\em second} (scalar)  {\em fundamental form}
$\A^N = \langle \vec{\A}^N,\nu \rangle$ and   the  {\em shape operator}
$A^N \in End(T_xN)$, defined by
$\langle A^N u,v \rangle = \langle D^M_u\nu,v \rangle= -\langle \vec{\A}^N(u,v),\nu \rangle$.
The {\em mean curvature vector} of $N$ at $x$ is
$\vec{h}^N (x)=\frac{1}{n} Tr \vec{\A}^N (x)$, while the (scalar) mean curvature,
associated with $\nu$, is $h^N  =\langle \vec{h}^N ,\nu \rangle$.

A manifold $M$ is called {\em asymptotically harmonic}  if all its horospheres
have constant mean curvature $h$. The curvature of $M$ being nonpositive, the
horospheres are convex and we have $f\ge 0$ when choosing $\nu$ pointing
to the center of the horosphere.

\subsubsection*{Hessian and Laplacian of Busemann functions}

The second fundamental form naturally appears when restricting a function
to a submanifold :
\begin{prop}\label{prop_hessian}
    Let $i:N\to M$ be an isometric immersion, let $F:M\to \R$ be a smooth
    function and let $f=F_{|_N}$ be its restriction to $N$.

    For all $x\in N$ and all $u,v\in T_xN$ we have
    $$
    (\hess^N f)(u,v) = (\hess^M F)(u,v) + \langle\nabla^MF,\vec{\A}^N(u,v)\rangle
    $$
\end{prop}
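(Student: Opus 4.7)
The plan is to combine the tensorial characterization of the Hessian, namely $(\hess F)(u,v) = u(V(F)) - (D_u V)(F)$ for any local extension $V$ of $v$, with the Gauss formula relating the ambient and induced Levi-Civita connections. Because the Hessian depends only on the pointwise values of its arguments, we are free to choose the extension of $v$ to be a vector field \emph{tangent} to $N$.

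Fix $x \in N$ and $u,v \in T_xN$, and extend $v$ to a smooth vector field $V$ on a neighborhood of $x$ in $N$. Since $f = F \circ i$ and $V$ is tangent to $N$, one has $V(F) = V(f)$ along $N$, whence in particular $u(V(F)) = u(V(f))$ at $x$. Writing both Hessians with this common $V$ and invoking the Gauss formula
$$D^M_u V = D^N_u V + \vec{\A}^N(u,v),$$
where $D^N_u V$ is tangent to $N$ and $\vec{\A}^N(u,v)$ is the normal component, we obtain
$$\langle D^M_u V, \nabla^M F\rangle = \langle D^N_u V, \nabla^M F\rangle + \langle \vec{\A}^N(u,v), \nabla^M F\rangle.$$
Since $D^N_u V$ is tangent to $N$ and the tangential part of $\nabla^M F$ along $N$ coincides with $\nabla^N f$, the first term on the right equals $\langle D^N_u V, \nabla^N f\rangle$. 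Substituting back into the ambient Hessian identity, the terms $u(V(F))$ and $\langle D^N_u V, \nabla^N f\rangle$ assemble into $(\hess^N f)(u,v)$, leaving exactly the extra term $\langle \nabla^M F, \vec{\A}^N(u,v)\rangle$ on the right-hand side.

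There is essentially no serious obstacle here; the only subtlety is the bookkeeping, namely extending $v$ as a \emph{tangent} field so that the identity $V(F) = V(f)$ is available, and then carefully splitting the ambient covariant derivative via the Gauss formula so that the normal component is paired with the \emph{ambient} gradient $\nabla^M F$ while the tangential part is silently converted into a pairing against $\nabla^N f$.
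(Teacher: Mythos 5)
Your proof is correct: the paper simply declares this proposition ``standard'' and omits the argument, and what you wrote is exactly that standard computation, namely evaluating both Hessians via $(\hess F)(u,v)=u(V(F))-(D_uV)(F)$ with an extension $V$ of $v$ tangent to $N$, and splitting $D^M_uV$ by the Gauss formula so that the tangential part recombines with $\nabla^N f$ into $\hess^N f$ and the normal part produces the term $\langle\nabla^M F,\vec{\A}^N(u,v)\rangle$. The bookkeeping points you flag (choosing a tangent extension so that $V(F)=V(f)$ along $N$, and identifying the tangential component of $\nabla^M F$ with $\nabla^N f$) are precisely the only subtleties, and you handle them correctly.
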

\begin{proof}
    The proof is standard.
\end{proof}

As a consequence,   the Hessian of the Busemann function is given by the second
scalar fundamental form of its horospheres, with respect to the inner  normal
vector field $\nu$; taking the trace we get
$\Delta b_\xi (y)= - Tr (\hess_y  b_{\xi}) = -nh_{\xi}  (y)$, where $h_{\xi} (y)$
is the mean curvature at $y$ of the horosphere centered in $\xi$ passing through
$y$, with respect to $\nu$. (Similarly, the second fundamental form of spheres is
the Hessian of the distance function to the center and the Laplacian of the
distance from a point $x$ gives the mean curvature of the spheres centered in $x$).

It follows, by the regularity theory of solutions of elliptic equations, that for
asymptotically harmonic manifolds Busemann functions and horospheres  are at least
as regular as the metric (whereas they are known to be real analytic on harmonic
manifolds, cf. \cite{Ranjan-Shah2}). Moreover, it is then straightforward to check
that,  for any asymptotically harmonic manifold $M$ with horospheres of mean
curvature $h$, the function $f(y)=\ex^{-nhb_{\xi}(y)}$ is  harmonic.

\subsubsection*{The Riccati equation}
 
Let $\xi\in\dM$ and  $\gamma$ be a geodesic such that
$\lim_{t\to-\infty}\gamma(t)=\xi$. For each $t$, let $A_{\xi}(t)$ be the shape
operator of the horosphere centered in $\xi$ passing through $\gamma(t)$, with
respect to the inner unit vector field   $\nu = -\nabla b_{\xi} =-\gamma'(t)$;
this family of operators satisfies the Riccati equation (cf. \cite{Karcher} \S 1.3):
\begin{equation} \label{eqn_Riccati}
	A_{\xi}'(t) + A_{\xi}^2(t) + R_M(\dot{\gamma}(t),.)\dot{\gamma}(t) = 0
\end{equation}
where $R_M$ is the Riemann tensor of $M$.

%
%
%
%
\section{Comparison of spheres on CH-manifolds}

In the sequel, we note $\M^n(-a^2)$ the simply connected Riemannian manifold
with constant sectional curvature $-a^2$, and
we shall note $C_a$ and $\cot_a$ the functions defined by:
$$
C_a(s) = \left\{\begin{array}{lr}
					\!\!\frac{1}{a^2}(\cosh(as)-1) & \mbox{if } a>0 \\
					\!\!\frac{s^2}{2} & \mbox{if } a=0
				\end{array}\right.
\mbox{ and }
\cot_a(s) = \left\{\begin{array}{lr}
						\!\!a\coth(as) & \mbox{if } a>0 \\
						\!\!\frac{1}{s} & \mbox{if } a=0
					\end{array}\right.
$$

%
\subsubsection*{Comparison of triangles}
%
When assuming a sectional curvature upper bound $K_M\le-a^2$ for $M$, the
classical Toponogov theorem (cf. \cite{Karcher}) implies that,
given two edges of a triangle in $M$
with angle $\alpha$ at the common vertex, then the third edge is larger than
the one of a triangle in the model space $\M^2(-a^2)$ with the same lengths for
the first two edges
and the same angle at the common vertex. The following lemma  is a slight
modification
of this result, where we compare the ratio of (some function of) the lengths of
the third edge
and of an ``intermediate edge''.

\begin{lem}[Triangle comparison with   curvature upper bound] 
  Let $M$ be a CH-manifold with $K_M\le-a^2\le 0$.
  Let $(xyz)$ and $(\tilde{x}\tilde{y}\tilde{z})$ be triangles in $M$ and
  $\M^2(-a^2)$ respectively, such that
  $r_1=d(x,y)=d(\tilde{x},\tilde{y})$,  $r_2=d(x,z)=d(\tilde{x},\tilde{z})$ and 
  $\alpha =\angle_x(y,z)=\angle_{\tilde{x}}(\tilde{y},\tilde{z})$.
  Moreover, for $\theta\in]0,1[$  let $p$, $q$ and $\tilde{p}$, $\tilde{q}$ be
  respectively the points on the geodesic segments $xy$, $xz$ and
  $\tilde{x}\tilde{y}$, $\tilde{x}\tilde{z}$  such that
  $d(x,p)=d(\tilde{x},\tilde{p})=\theta r_1$ and
  $d(x,q)=d(\tilde{x},\tilde{q})=\theta r_2$ (cf. figure \ref{fig-triangles}).
  Then: 
  $$
  \frac{C_a(d(y,z))}{C_a(d(p,q))} \ge
  \frac{C_a(d(\tilde{y},\tilde{z}))}{C_a(d(\tilde{p},\tilde{q}))} =
  F_{a} (r_1,r_2, \alpha, \theta).
  $$
  \label{lem_triangle_upper}
\end{lem}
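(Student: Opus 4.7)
My plan is to split the comparison into two steps. The first is a reduction from $M$ to the model space $\M^2(-a^2)$: since $M$ is a CH-manifold with $K_M\le -a^2$, Rauch's comparison theorem implies that $M$ satisfies the Alexandrov (CAT$(-a^2)$) inequality. I consider the Alexandrov comparison triangle $(x',y',z')$ of $(xyz)$ in $\M^2(-a^2)$, namely the triangle with the same three side-lengths, so in particular $d(y',z')=d(y,z)$. Let $p',q'$ be the points on $[x'y'],[x'z']$ at distances $\theta r_1,\theta r_2$ from $x'$. The CAT$(-a^2)$ condition gives $d(p,q)\le d(p',q')$, so (using monotonicity of $C_a$)
$$
\frac{C_a(d(y,z))}{C_a(d(p,q))}\ \ge\ \frac{C_a(d(y',z'))}{C_a(d(p',q'))}.
$$
Both $(x',y',z')$ and $(\tilde x,\tilde y,\tilde z)$ now live in $\M^2(-a^2)$ with the same sides $r_1,r_2$ at the respective vertex; the hinge-Toponogov inequality $d(y,z)\ge d(\tilde y,\tilde z)$, combined with the hyperbolic law of cosines, forces $\alpha':=\angle_{x'}(y',z')\ge \alpha$.

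The second step is the intrinsic monotonicity in the model. It suffices to check that, in $\M^2(-a^2)$ with $r_1,r_2,\theta$ fixed, the ratio
$$
R(\beta)\ =\ \frac{A_1-B_1\cos\beta}{A_2-B_2\cos\beta}\qquad\Bigl(A_i=\cosh(au_i)\cosh(av_i)-1,\ B_i=\sinh(au_i)\sinh(av_i)\Bigr)
$$
with $(u_1,v_1)=(r_1,r_2)$, $(u_2,v_2)=(\theta r_1,\theta r_2)$, is non-decreasing in $\beta\in[0,\pi]$. A direct differentiation in $c=\cos\beta$ shows this is equivalent to $A_1B_2\le A_2B_1$, i.e.\ to saying that
$$
f(\theta)=\frac{\cosh(\theta U)\cosh(\theta V)-1}{\sinh(\theta U)\sinh(\theta V)},\qquad U=ar_1,\ V=ar_2,
$$
is non-increasing on $(0,1]$. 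Setting $2S=U+V$ and $2W=|U-V|$ and using the identities $\cosh(S\pm W)\cosh(S\mp W)=1+\sinh^2S+\sinh^2W$ and $\sinh(S+W)\sinh(S-W)=\sinh^2S-\sinh^2W$, I rewrite $f$ as $(\sinh^2(\theta S)+\sinh^2(\theta W))/(\sinh^2(\theta S)-\sinh^2(\theta W))$. Hence $f$ is non-increasing iff $\sinh(\theta S)/\sinh(\theta W)$ is non-decreasing, which follows immediately from the fact that $t\mapsto\tanh(t)/t$ is decreasing on $(0,\infty)$.

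The substantive geometric content is the CAT$(-a^2)$ inequality used in the first step: unlike hinge-Toponogov (which bounds $d(p,q)$ from below), what is needed is the complementary \emph{upper} bound for $d(p,q)$ in terms of the model triangle with the same three side-lengths. Once this is granted, the rest is bookkeeping in hyperbolic trigonometry. The degenerate case $a=0$ is consistent: $B_i=0$, $f$ is constant in the obvious limiting sense, $R$ is independent of $\alpha$, and the lemma collapses to the familiar CAT$(0)$ convexity $d(p,q)\le\theta\, d(y,z)$.
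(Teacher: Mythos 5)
Your proof is correct and follows essentially the same route as the paper's: compare with the model triangle having the same three side lengths (so that the CAT$(-a^2)$/Toponogov comparison gives $d(p,q)\le d(p',q')$ and $\alpha'\ge\alpha$), then invoke the monotonicity of $F_a$ in the angle. The only difference is that you actually prove the monotonicity of $F_a$ in $\alpha$ (via the reduction to $f(\theta)$ and the monotonicity of $t\coth t$), a fact the paper merely asserts in Remark \ref{rem_triangle}; your computation there is correct and is a welcome supplement.
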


\begin{rem}
  By the cosine formula in $\M^2(-a^2)$ (cf. \cite{Bridson-Haefliger}
  proposition I.2.7) we know that the right-hand side of the above inequality
  only depends on the lengths $r_1, r_2, \alpha$ and $\theta$, whence the
  existence of the function $F_{a}$.

  When $a=0$ we have $F_0=\frac{1}{\theta^2}$ and lemma \ref{lem_triangle_upper}
  is a direct consequence of the convexity of the distance function in
  $CAT(0)$-spaces (cf. \cite{Bridson-Haefliger} proposition II.2.2).
  
  When $a>0$ we find  : 
  $$
  F_a(r_1,r_2,\alpha,\theta) =
  \frac{\cosh(ar_1)\cosh(ar_2)-\sinh(ar_1)\sinh(ar_2)\cos(\alpha)-1}
  {\cosh(a\theta r_1)\cosh(a\theta r_2)-
  \sinh(a\theta r_1)\sinh(a\theta r_2)\cos(\alpha)-1}.
  $$
  
  An important point in the proof of lemma \ref{lem_triangle_upper} is that,
  whenever $\theta\le 1$, the function $F_a$ is nondecreasing with respect to
  $\alpha$.
  \label{rem_triangle}
\end{rem}
\begin{proof}[Proof of lemma \ref{lem_triangle_upper}]
  First consider a comparison triangle $(\bar{x}\bar{y}\bar{z})$
  in $\M^2(-a^2)$, that is such that   $d(\bar{x},\bar{y})= r_1$,
  $d(\bar{x},\bar{z})=r_2$, and $d(\bar{y},\bar{z})\!=\!d(y,z)$.
  Define $\bar{p}$, $\bar{q}$ to be the points on the geodesic segments
  $\bar{x}\bar{y}$ and $\bar{x}\bar{z}$ respectively, such that
  $d(\bar{x},\bar{p})=\theta r_1$, 
  $d(\bar{x},\bar{q})=\theta r_2$, and let
  $\bar{\alpha}=\angle_{\bar{x}}(\bar{y},\bar{z})$.
  By Toponogov theorem, we have $d(\bar{p},\bar{q})\ge d(p,q)$
  and $\bar{\alpha}\ge\alpha$.
  \begin{figure}[htbp]
    \scriptsize
    $$
    \input{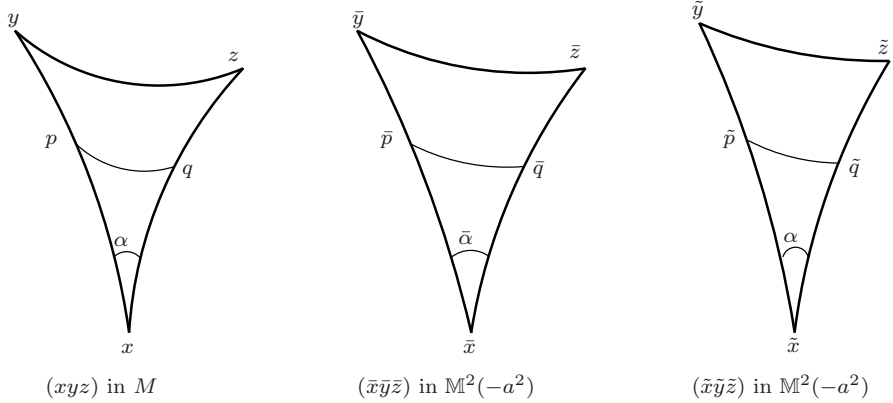}
    $$
    \caption{comparison triangles}
    \normalsize
    \label{fig-triangles}
  \end{figure}
  Using these inequalities and remark \ref{rem_triangle} we have
  $$
  \frac{C_a(d(y,z))}{C_a(d(p,q))} \ge
	\frac{C_a(d(y,z))}{C_a(d(\bar{p},\bar{q}))}
	= F_a(r_1,r_2,\bar{\alpha},\theta)
	\ge F_a(r_1,r_2,\alpha,\theta)
	= \frac{C_a(d(\tilde{y},\tilde{z}))}{C_a(d(\tilde{p},\tilde{q}))}
  $$
\end{proof}

A similar inequality holds for CH-manifolds with  curvature lower bound:

\begin{lem}[Triangle comparison with   curvature lower bound] 
  Let $M$ be a CH-manifold with $K_M\ge-b^2$.
  Let $(xyz)$ and $(\tilde{x}\tilde{y}\tilde{z})$ be triangles in $M$ and
  $\M^2(-b^2)$ respectively, such that
  $r_1=d(x,y)=d(\tilde{x},\tilde{y})$,  $r_2=d(x,z)=d(\tilde{x},\tilde{z})$
  and  $\alpha =\angle_x(y,z)=\angle_{\tilde{x}}(\tilde{y},\tilde{z})$.
  Moreover, for $\theta\in]0,1[$  let $p$, $q$ and $\tilde{p}$, $\tilde{q}$
  be respectively the points on the geodesic segments $xy$, $xz$ and
  $\tilde{x}\tilde{y}$, $\tilde{x}\tilde{z}$  such that
  $d(x,p)=d(\tilde{x},\tilde{p})=\theta r_1$ and 
  $d(x,q)=d(\tilde{x},\tilde{q})=\theta r_2$.
  Then: 
   $$
  \frac{C_b(d(y,z))}{C_b(d(p,q))} \le
  \frac{C_b(d(\tilde{y},\tilde{z}))}{C_b(d(\tilde{p},\tilde{q}))} =
  F_{b} (r_1,r_2, \alpha, \theta).
  $$
  \label{lem_triangle_lower}
\end{lem}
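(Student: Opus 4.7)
The plan is to mirror the argument of Lemma \ref{lem_triangle_upper} step by step, simply reversing every inequality by invoking the lower-bound version of the Toponogov comparison theorem in place of the upper-bound one. Since $M$ is a CH-manifold, the exponential map is a diffeomorphism and any two points are joined by a unique minimizing geodesic, so the Rauch II / Toponogov comparison for $K_M \ge -b^2$ can be applied globally without any cocompactness or diameter restriction.

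Concretely, I first construct a comparison triangle $(\bar{x}\bar{y}\bar{z})$ in $\M^2(-b^2)$ with $d(\bar{x},\bar{y}) = r_1$, $d(\bar{x},\bar{z}) = r_2$ and $d(\bar{y},\bar{z}) = d(y,z)$, and take $\bar{p}, \bar{q}$ at distances $\theta r_1, \theta r_2$ from $\bar{x}$ on the segments $\bar{x}\bar{y}$ and $\bar{x}\bar{z}$. The SSS form of the lower-bound Toponogov theorem yields $\bar{\alpha} \le \alpha$, while the distance comparison for corresponding interior points (the Riemannian counterpart of the CBB$(-b^2)$ property) gives $d(\bar{p},\bar{q}) \le d(p,q)$.

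Combining these with the fact that $C_b$ is strictly increasing on $[0,+\infty)$ and that, by Remark \ref{rem_triangle}, $F_b$ is nondecreasing in $\alpha$ for $\theta \le 1$, I obtain
$$
\frac{C_b(d(y,z))}{C_b(d(p,q))} \le \frac{C_b(d(\bar{y},\bar{z}))}{C_b(d(\bar{p},\bar{q}))} = F_b(r_1,r_2,\bar{\alpha},\theta) \le F_b(r_1,r_2,\alpha,\theta) = \frac{C_b(d(\tilde{y},\tilde{z}))}{C_b(d(\tilde{p},\tilde{q}))},
$$
which is exactly the required inequality. The only subtle point I expect to be the main obstacle is the segment comparison $d(\bar{p},\bar{q}) \le d(p,q)$: this does not follow at once from the textbook hinge form of Toponogov under a lower curvature bound, but on a CH-manifold it can be established by a Jacobi field estimate along $\gamma_{xy}$ and $\gamma_{xz}$ together with $K_M \ge -b^2$, and then globalized thanks to the uniqueness and minimality of geodesics; everything else in the proof is an elementary translation of the upper-bound case.
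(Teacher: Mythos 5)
Your proof is correct and follows essentially the same route as the paper: build the SSS comparison triangle $(\bar{x}\bar{y}\bar{z})$ in $\M^2(-b^2)$, get $\bar{\alpha}\le\alpha$ and $d(\bar{p},\bar{q})\le d(p,q)$ from Toponogov's lower-bound comparison, and conclude by monotonicity of $C_b$ and of $F_b$ in $\alpha$. The "subtle point" you flag is handled in the paper by simply invoking Toponogov's theorem (whose triangle/point-on-side version for $K_M\ge-b^2$ gives the two-point distance comparison directly), so no separate Jacobi-field argument is needed.
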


\begin{proof}
  The proof is similar to that of lemma \ref{lem_triangle_upper}. Toponogov
  theorem gives $d(\bar{p},\bar{q})\le d(p,q)$ and $\bar{\alpha}\le\alpha$,
  and by the monotonicity of the function $F_b$ we get
  $$
  \frac{C_b(d(y,z))}{C_b(d(p,q))} \le
	\frac{C_b(d(y,z))}{C_b(d(\bar{p},\bar{q}))}
	= F_b(r_1,r_2,\bar{\alpha},\theta)
	\le F_b(r_1,r_2,\alpha,\theta)
	= \frac{C_b(d(\tilde{y},\tilde{z}))}{C_b(d(\tilde{p},\tilde{q}))}
  $$
\end{proof}

\subsubsection*{Comparison of spheres}

Let $S_x(r)$ and $S_y(R)$ be two geodesic spheres in $M$, with $r<R$,  tangent 
at some point $z$,   with   $S_x(r)$ internal to  $S_y(R)$. Let $\vec \A_x$ and
$\vec \A_y$ (resp.  $\A_x, \A_y$) be the second, vector-valued (resp. scalar)
fundamental forms of $S_x(r)$ and $S_y(R)$, and let $\nu$ be the common inner
unit normal vector at $z$. We will now compare the two second fundamental forms
$\A_x$ and $\A_y$. 

Let $u\in T_zS_x(r)$ be a unitary vector, and let $c_u(s)$ be the geodesic of
$S_x(r)$ with initial tangent vector $u$. Denote by $r_x$ and $r_y$ the distance
functions to $x$ and $y$ respectively, and let $r_y(s)=r_y(c_u(s))$ be the
restriction of the function $r_y$ to the curve $c_u$. 
Applying proposition \ref{prop_hessian} to $c$ and $r_y$ we find
$$
r_y''(0) = \bigl(\hess^M r_y\bigr)(u,u) + \langle\nabla r_y,\vec \A_x(u,u)\rangle,
$$
and, since $\hess^M r_y$ gives the second fundamental form of $S_y(R)$ w.r. to  
$\nu$,  
\begin{equation}
  r_y''(0) =  \A_y(u,u)   - \A_x(u,u) 
  \label{f_second}
\end{equation}
But  $r_y''(0)\le 0$ as $z$ is the maximum of  $r_y$ on $S_x(r)$, thus  at the
point  $z$ we have $\A_y \le  \A_x$ which means that $S_x(r)$ is ``more curved''
than $S_y(R)$. Using the above comparison lemmas for triangles, we get sharper
comparison estimates for the tangent spheres :

\begin{lem}
  Let $(M,g)$ be a CH-manifold with  $K_M \le -a^2$. With the above notations,
  the second fundamental forms of $S_x(r), S_y(R)$ at the tangent point $z$
  satisfy:
  $$ 0 \le  \A_x - \A_y  \le (\cot_a r -\cot_a R) \, g $$
  Moreover, if we assume $-b^2 \le K_M$  then at the tangent point $z$  we  also
  have: 
  $$ (\cot_b r -\cot_b R ) \, g   \le  \A_x  -   \A_y $$ 
  \label{lem_spheres}
\end{lem}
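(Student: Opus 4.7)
The plan is to derive sharp upper and lower bounds for $r_y''(0)$ by a second-order analysis and invoke the identity \eqref{f_second}. Let $\gamma$ be the common normal geodesic with $\gamma(0)=z$, $\gamma(r)=x$, $\gamma(R)=y$ (so $x$ lies between $z$ and $y$ and $d(x,y)=R-r$), and set $\beta(s)=\angle_x(z,c_u(s))$, so $\beta(0)=0$; since $z$ and $y$ are antipodal as seen from $x$, the angle at $x$ in the triangle $(x,y,c_u(s))$ equals $\pi-\beta(s)$. Moreover $r_y(0)=R$ and $r_y'(0)=0$, because $d(y,p)\le d(y,x)+d(x,p)=R$ for every $p\in S_x(r)$, so $z$ maximizes $r_y|_{S_x(r)}$.

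For the upper bound on $\A_x-\A_y$ I would first apply Lemma~\ref{lem_triangle_upper} (which contains the Toponogov inequality as a limiting case) together with the hyperbolic cosine rule in $\M^2(-a^2)$ to the triangle $(x,y,c_u(s))$, and use $\cos(\pi-\beta)=-\cos\beta$ to obtain
\[
\cosh(a\,r_y(s))\ \ge\ \cosh(aR)-\sinh(ar)\sinh(a(R-r))\bigl(1-\cos\beta(s)\bigr).
\]
To control $\beta(s)$, I would apply the same lemma to the isoceles triangle $(x,z,c_u(s))$ with two sides of length $r$ and vertex angle $\beta(s)$, obtaining
\[
\cosh(a\,d(z,c_u(s)))\ \ge\ 1+\sinh^2(ar)\bigl(1-\cos\beta(s)\bigr),
\]
which combined with $d(z,c_u(s))\le s$ (arclength of $c_u$) yields $\beta(s)^2\le a^2 s^2/\sinh^2(ar)+O(s^4)$.

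The final step is to match the $s^2$-coefficients on both sides of the first inequality. Using the expansions $\cosh(a\,r_y(s))=\cosh(aR)+\tfrac{a}{2}\sinh(aR)\,r_y''(0)\,s^2+O(s^3)$ and $1-\cos\beta(s)=\tfrac{1}{2}\beta(s)^2+O(s^4)$, together with the identity $\sinh(a(R-r))=\sinh(aR)\cosh(ar)-\cosh(aR)\sinh(ar)$, I would conclude
\[
r_y''(0)\ \ge\ -\frac{a\sinh(a(R-r))}{\sinh(ar)\sinh(aR)}\ =\ -\bigl(\cot_a r-\cot_a R\bigr),
\]
which, via \eqref{f_second}, is exactly the claimed bound $\A_x-\A_y\le(\cot_a r-\cot_a R)\,g$; the inequality $\A_x-\A_y\ge 0$ is already recorded in the excerpt. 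The lower bound under $K_M\ge-b^2$ is obtained by running precisely the same scheme with Lemma~\ref{lem_triangle_lower} in place of Lemma~\ref{lem_triangle_upper} and all inequalities reversed. The main technical obstacle — and the most delicate point of the proof — appears here: reversing the isoceles-triangle step requires not just $d(z,c_u(s))\le s$ but the two-sided expansion $d(z,c_u(s))=s+O(s^3)$, which is obtained by writing $c_u$ in normal coordinates at $z$ and using that $c_u''(0)=\vec{\A}_x(u,u)$ is orthogonal to $u$; the $O(s^3)$ remainder must then be tracked carefully to recover the sharp constant $\cot_b r-\cot_b R$.
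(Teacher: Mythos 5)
Your argument is correct, and it reaches the sharp constants by a route genuinely different from the paper's. The paper never works with the angle at the center $x$: it introduces the angle $\alpha(s)$ between $\nabla r_x$ and $\nabla r_y$ at the moving point $c_u(s)$, uses $r_y'(s)=-\sin\alpha(s)$ so that $\lim_{s\to0}(1-\cos\alpha(s))/s^2=\tfrac12 r_y''(0)^2$, and controls $1-\cos\alpha(s)$ by combining a hinge comparison for the small triangle $(c_u(s),x(s),x)$ with the intermediate-point comparison Lemmas \ref{lem_triangle_upper} and \ref{lem_triangle_lower} applied to the big triangle $(y,z,c_u(s))$ (with $\theta=\frac{R-r}{R}$ and the auxiliary point $x(s)$); this is precisely what those two lemmas of Section 2 were built for. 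You instead use only two hinge comparisons based at $x$ --- for $(x,y,c_u(s))$ with included angle $\pi-\beta(s)$ and for the isoceles triangle $(x,z,c_u(s))$ with included angle $\beta(s)$ --- and then match $s^2$-coefficients in a direct Taylor expansion of $\cosh(a\,r_y(s))$; all inequality directions are right in both curvature regimes, and in constant curvature everything is an equality to the relevant order, so the constants $\cot_a r-\cot_a R$ and $\cot_b r-\cot_b R$ come out sharp. Your scheme thus bypasses Lemmas \ref{lem_triangle_upper}--\ref{lem_triangle_lower} entirely, at the price of losing the paper's single two-sided estimate on $|r_y''(0)|$ (you must run the argument once per sign, which you do). One remark: the point you flag as most delicate is actually harmless. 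Since $\cosh(b\,d(z,c_u(s)))-1=\tfrac{b^2}{2}\,d(z,c_u(s))^2+O(d^4)$ and the chord length enters the $s^2$-coefficient only multiplicatively, all you need is $d(z,c_u(s))/s\to1$, which holds for any unit-speed $C^1$ curve (and is exactly the fact the paper invokes); your stronger expansion $d(z,c_u(s))=s+O(s^3)$, justified by $\langle u,\vec{\A}_x(u,u)\rangle=0$ in normal coordinates at $z$, is correct but not required. Finally, as in the paper, the case $a=0$ of the first inequality follows by the same computation with the Euclidean law of cosines.
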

 
\begin{rem}
  These estimates are optimal, since they are  equalities when  $M$  has,
  respectively, constant curvature $-a^2$ or $-b^2$.
\end{rem}

\begin{proof}
	We only consider the case $a>0$ ; when $a=0$, the proof is similar (just
	replace the hyperbolic laws by the Euclidean ones) and is left to the reader.

	As before, let $u\in T_zS_x(r)$ be a unitary vector,  let  $c(s)$ be the
	geodesic of $S_x(r)$ with initial tangent vector $u$ and let $r_y(s)$ be the
	restriction of the function $r_y$ to  the curve $c$. 
    For $s>0$ we consider (cf. figure~\ref{fig1}) :
    \begin{itemize}
        \item the angle  $\alpha(s)$ between $\nabla r_y$ and $\nabla r_x$ at
        	$c(s)$;
        \item the angle  $\beta(s)$  between the geodesic lines from $y$ to $z$
            and from $y$ to $c(s)$;
        \item $\theta=\frac{R-r}{R}$ and  the point $x(s)$ of the geodesic
        	from $y$ to $c(s)$ such that $d(y,x(s))=\theta d(y,c(s))$;
    \end{itemize}
	so  $r_y'(s)=\langle\nabla r_y ,\dot{c}(s)\rangle   =-\sin(\alpha(s))$.
    \begin{figure}[htbp]
        \scriptsize
        $$
        \input{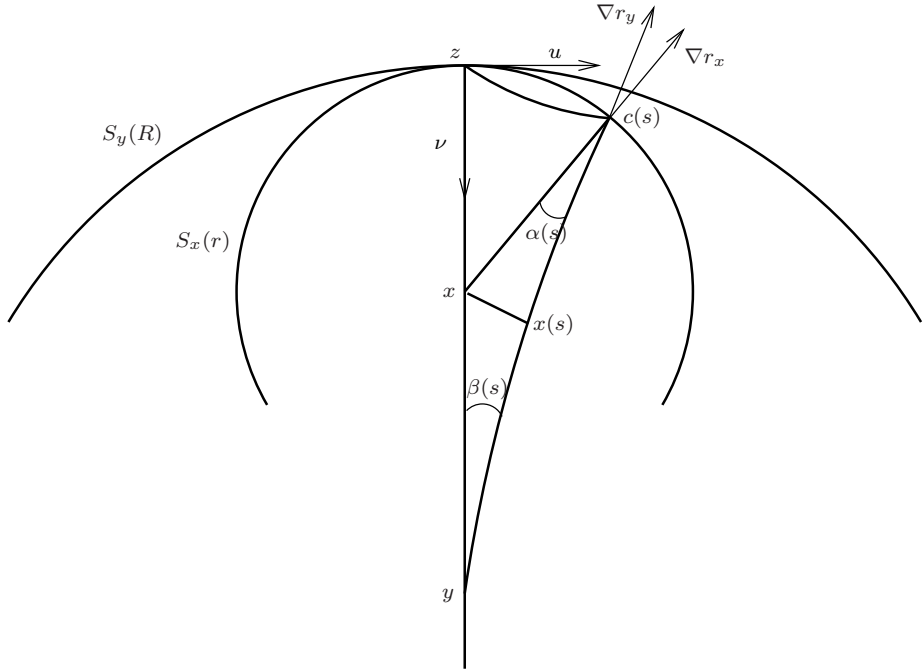}
        $$
        \caption{Comparing    the second fundamental forms of tangent spheres}
        \normalsize
        \label{fig1}
    \end{figure}
	Using Toponogov theorem for the triangle $(c(s)x(s)x)$ and the law of cosine  
	in $\M^2(-a^2)$ we get
	\begin{eqnarray}
		\cosh(ad(x,x(s))) & \ge & \cosh(ar)\cosh(a(1-\theta)r_y(s)) \nonumber \\
		 & & - \sinh(ar)\sinh(a(1-\theta)r_y(s))\cos(\alpha(s)) \nonumber \\
		 & \ge & 1 + \sinh(ar)\sinh(a(1-\theta)r_y(s))(1-\cos(\alpha(s))
		\label{eqn_carnot}
	\end{eqnarray}
	On the other hand, lemma \ref{lem_triangle_upper} applied to the triangle
    $(yzc(s))$ implies that
    \begin{equation}
      \cosh(ad(x,x(s)))-1 \le
      \frac{\cosh(ad(z,c(s)))-1}{F_a(R,r_y(s),\beta(s),\theta)}
      \label{eqn_use_lemma_a}
    \end{equation}
    which, plugged in (\ref{eqn_carnot}), yields  :   
	\begin{equation}
		1-\cos(\alpha(s)) \le
        	\frac{\cosh(ad(z,c(s)))-1}
        	{\sinh(ar)\sinh(a(1-\theta)r_y(s))F_a(R,r_y(s),\beta(s),\theta)}
      \label{eqn_cos}
    \end{equation}
	We divide  by $s^2$ and pass  to the limit for $s \to 0$ in
	(\ref{eqn_cos}) : as $r_y'(s)^2  = \sin^2 \alpha(s)$ and $r_y'(0)=0$,
	we have $\lim_{s \to 0}\frac{1 - \cos \alpha(s)}{s^2} = 
	\frac12  \lim_{s \to 0} \left( \frac{r_y'(s)}{s} \right)^2 =
	\frac12  r_y''(0) ^2$; 
	then,  notice that $\frac{d(z,c_u(s))}{s}\to 1$ and that, as
	$r_y(s)-R = O(s^2)$ and $\beta(s) = O(s)$, we have
	$\lim_{s\to 0}F_a(R,r_y(s),\beta(s),\theta)=
	\frac{\sinh^2(aR)}{\sinh^2(a(R-r))}$. So from (\ref{eqn_cos}) we get
	$$
	\left| r_y''(0)\right| \le \frac{a\sinh(a(R-r))}{\sinh(ar)\sinh(aR)} =
	a(\coth(ar)-\coth(aR))
	$$
	By (\ref{f_second}), as  $r_y''(0)\le 0$ we  deduce 
	$\A_x(u,u) - \A_y(u,u) \le \cot_a r - \cot_a R$.
   
	Consider now the curvature lower bound $-b^2\le K_M$. By Toponogov theorem
    and the law of cosine, equation (\ref{eqn_carnot}) becomes
    \begin{equation}
		1-\cos(\alpha(s)) \ge
		  \frac{\cosh(bd(x,x(s)))-1}{\sinh(br)\sinh(b(1-\theta)r_y(s))}
      	+ \frac{1-\cosh(br(1-\frac{r_y(s)}{R}))}{\sinh(br)\sinh(b(1-\theta)r_y(s))}
      \label{eqn_carnot-b}
    \end{equation}
	while lemma \ref{lem_triangle_lower} implies
    \begin{equation}
      \cosh(bd(x,x(s)))-1 \ge
      \frac{\cosh(bd(z,c(s)))-1}{F_b(R,r_y(s),\beta(s),\theta)},
    \end{equation}
    which plugged in (\ref{eqn_carnot-b}) yields
    \begin{multline}
      1-\cos(\alpha(s)) \ge
        \frac{\cosh(bd(z,c(s)))-1}
        {\sinh(br)\sinh(b(1-\theta)r_y(s))F_b(R,r_y(s),\beta(s),\theta)} \\
        + \frac{1-\cosh(br(1-\frac{r_y(s)}{R}))}{\sinh(br)\sinh(b(1-\theta)r_y(s))} 
      \label{eqn_cos-b}
    \end{multline}
    Dividing by $s^2$ and letting $s \to 0$  as before, we get
    $ (\cot_b r -\cot_b R) g   \! \le\!  \A_x  \! - \!  \A_y  $.
\end{proof}

In the sequel,  we will be mainly interested in the second fundamental form of
horospheres. We will use a result similar to lemma \ref{lem_spheres}, where the
sphere $S_y(R)$ is replaced by a horosphere:
\begin{lem}
  Let $(M,g)$ be a CH-manifold with  $K_M \le -a^2$. \\
  Let $S_x(r)$ and $H_{\xi}(z)$ be  respectively  a sphere and a horosphere  tangent 
  at a point $z$, with   $S_x(r)$ internal to the horosphere.
  Let $\A_x, \A_{\xi}$  be the second fundamental forms of $S_x(r), H_{\xi}(z)$  with
  respect to  be the common inner unit normal vector at \nolinebreak $z$.
  Then, at the tangent point $z$ we have  : 
  \begin{equation}
  	 0 \le \A_x - \A_{\xi}  \le  ( \cot_a r - a  ) \, g
  	\label{eqfundamentalforms1}
  \end{equation}
   Moreover, if we assume $-b^2 \le K_M$,  then at the tangent point $z$  we  also
   have: 
   $$   ( \cot_b r -b  ) \, g \le  \A_x  -  \A_\xi $$
  \label{lem_horosphere}
\end{lem}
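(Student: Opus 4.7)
The plan is to obtain Lemma \ref{lem_horosphere} as a limit of Lemma \ref{lem_spheres}, exploiting the fact that the horosphere $H_\xi(z)$ is a $C^2$-limit of geodesic spheres whose centers recede to infinity along the geodesic ending at $\xi$. Let $\gamma$ denote the geodesic ray issuing from $z$ with $\dot\gamma(0)=\nu$, so that $\gamma(+\infty)=\xi$, and for each $R>r$ set $y_R=\gamma(R)$. The sphere $S_{y_R}(R)$ then passes through $z$, has inner unit normal $\nu$ there, and is tangent to $H_\xi(z)$ at $z$ with the same inner normal.

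Before invoking Lemma \ref{lem_spheres}, one checks that $S_x(r)$ is internal to $S_{y_R}(R)$. Since $x=\gamma(r)$ lies between $z$ and $y_R$ on $\gamma$, we have $d(x,y_R)=R-r$, and for any $w\in S_x(r)$ the triangle inequality gives $d(w,y_R)\le d(w,x)+d(x,y_R)=R$; hence $S_x(r)\subset\bar{B}_{y_R}(R)$. Lemma \ref{lem_spheres} applied at $z$ then yields
$$
0\le\A_x-\A_{y_R}\le(\cot_a r-\cot_a R)\,g,
$$
and, under the additional lower bound $K_M\ge -b^2$,
$$
(\cot_b r-\cot_b R)\,g\le\A_x-\A_{y_R}.
$$

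Letting $R\to+\infty$, one has $\cot_a R=a\coth(aR)\to a$ and $\cot_b R\to b$, so the required estimates follow provided $\A_{y_R}\to\A_\xi$ at $z$. This convergence is the main (and really only) technical point of the argument. I would justify it by noting that on a CH-manifold $b_\xi$ is the $C^2$ limit, uniformly on compact sets, of the normalized distance functions $r_{y_R}-R$, a consequence of the $C^2$-regularity of $b_\xi$ established by Heintze--Im Hof and recalled in Section~1; the corresponding Hessians, restricted to the tangent space at $z$, coincide up to sign with the shape operators of $S_{y_R}(R)$ and of $H_\xi(z)$, and passing to the limit yields $\A_{y_R}(z)\to\A_\xi(z)$. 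Equivalently, along $\gamma$ both families of shape operators solve the Riccati equation (\ref{eqn_Riccati}), and continuous dependence on the terminal conditions gives the same conclusion. Once this convergence is in hand, everything else is a direct transcription of Lemma \ref{lem_spheres}.
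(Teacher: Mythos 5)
Your proposal is correct and follows exactly the first route the paper itself indicates for Lemma \ref{lem_horosphere} (``taking limits, in the inequalities of lemma \ref{lem_spheres}, for $y$ tending to $\xi$ along the geodesic $\exp_z(t\nu)$''), the proof being left to the reader there; you supply the details, including the one genuinely technical point, namely the convergence $\A_{y_R}(z)\to\A_\xi(z)$, which you justify acceptably via the Heintze--Im Hof $C^2$ theory of Busemann functions (or, equivalently, the standard stable-solution behaviour of the Riccati equation (\ref{eqn_Riccati})) recalled in Section~1.
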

This result can be obtained in two different ways: taking limits, in the inequalities
of lemma \ref{lem_spheres}, for $y$ tending to  $\xi$ along the geodesic
$\exp_z(t\nu)$, or following the same proof  with the Busemann function $b_\xi$ in
place of $r_y$. The proof is left to the reader.

%
%
%
%
\section{Asymptotically harmonic CH-manifolds}

In this section,  $M$ will always be an asymptotically harmonic CH-manifold with
horospheres of constant mean curvature $h$. 

\subsection{The entropy and the spectrum}
%
We are interested here in two invariants of the manifold $M$ : the volume entropy and
the spectrum. The entropy is determined by the behaviour of the volume of balls
whose second derivative (with respect to the radius) is given, in turns, by the mean
curvature of the spheres.  
On the other hand, the spectrum can be determined by using special functions whose
Laplacian has a nice behaviour; in our case, the distance function, whose Laplacian
is again given by the mean curvature of spheres (see discussion in \S1).

For points $x$ and $y$ in $M$, let $\vec{h}_x(y)$ be the mean curvature vector at $y$
of the sphere $S_x(d(x,y))$, and  $h_x(y)=-\langle \vec{h}_x(y),\nabla r_x \rangle$.
Notice that, as  $K_M\le 0$, balls and horoballs are convex, so both $h$ and 
$h_x(y)$ are non-negative.

\begin{lem}
    Let $M^{n+1}$ be an asymptotically harmonic CH-manifold.
    
    \noindent For all $x\in M$ and $r>0$, the sphere $S_x(r)$ satisfies
    $$
    \forall\ y\in S_x(r)\ \ \ h\le h_x(y) \le h + \frac{1}{r}.
    $$
    \label{main}
\end{lem}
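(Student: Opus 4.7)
The plan is to derive both inequalities as an immediate consequence of Lemma \ref{lem_horosphere} (the comparison between a sphere and a tangent horosphere), applied with $a=0$ since on a Cartan--Hadamard manifold we have $K_M \le 0 = -a^2$. The key idea is to identify, for each $y \in S_x(r)$, a horosphere tangent to $S_x(r)$ at $y$ sharing the same inner normal, and then to compare their second fundamental forms and take traces.

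First I would fix $y \in S_x(r)$ and let $\nu = -\nabla r_x(y)$ be the inner unit normal to $S_x(r)$ at $y$ (pointing back toward $x$). Let $\gamma$ be the geodesic ray with $\gamma(0)=y$ and $\dot{\gamma}(0) = \nu$; this ray passes through $x$ at time $r$, and I set $\xi = \lim_{t\to+\infty}\gamma(t) \in \partial_\infty M$. By construction, $-\nabla b_\xi(y) = \nu$, so the horosphere $H_\xi(y)$ is tangent to $S_x(r)$ at $y$ with the same inner normal $\nu$. Moreover, since the horosphere $H_\xi(y)$ is the limit of the spheres $S_{\gamma(t)}(t)$ as $t\to+\infty$, and all these spheres contain $S_x(r)$ inside their respective balls (by convexity and the fact that $x$ lies on the ray from $y$ to $\xi$), the sphere $S_x(r)$ is internal to the horoball bounded by $H_\xi(y)$.

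Next I would invoke Lemma \ref{lem_horosphere} with $a=0$, which yields, as bilinear forms on $T_y S_x(r) = T_y H_\xi(y)$,
\begin{equation*}
0 \;\le\; \A_x - \A_{\xi} \;\le\; \Bigl(\cot_0 r - 0\Bigr)\, g \;=\; \frac{1}{r}\, g.
\end{equation*}
Taking the trace over an orthonormal basis of the $n$-dimensional space $T_y S_x(r)$ and dividing by $n$, the left-hand side gives $h_x(y) - h_{\xi}(y)$ and the right-hand side gives $\frac{1}{r}$. Since $M$ is asymptotically harmonic, $h_{\xi}(y) = h$, so we conclude
\begin{equation*}
0 \;\le\; h_x(y) - h \;\le\; \frac{1}{r},
\end{equation*}
which is exactly the claim.

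There is no real obstacle here: the proof is a direct application of Lemma \ref{lem_horosphere}. The only mild point requiring care is the correct choice of $\xi$ so that tangency and the ``internal'' condition both hold with matching inner normals; once the geometric setup is in place, the bound on $h_x(y) - h$ follows by simply taking the trace of the tensor inequality.
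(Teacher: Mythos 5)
Your proof is correct and follows essentially the same route as the paper: apply Lemma \ref{lem_horosphere} (with $a=0$, as only $K_M\le 0$ is available) to the sphere $S_x(r)$ and the tangent horosphere $H_\xi(y)$ centered at the endpoint of the ray from $y$ through $x$, then take traces. The extra care you take in verifying the tangency, the matching inner normals, and the internal position of $S_x(r)$ is left implicit in the paper but is exactly the right justification.
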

\begin{proof}
  From lemma \ref{lem_horosphere} we have
  $$
  \A_{\xi} (u,u) \le \A_x (u,u) \le \A_{\xi} (u,u) + \frac{1}{r}|u|^2
  $$
  where $\A_x$ and $\A_{\xi}$ are, respectively,  the second fundamental forms of
  $S_x(r)$ and   of the horosphere  $H_{\xi}(y)$, tangent to $S_x(r)$ at $y$.
  Taking the trace on an orthonormal basis gives the result.
\end{proof}

We fix $x\in M$. For $r>0$,  let $B_x(r)$ be the ball of radius $r$ centered in $x$, 
and $V(r)=\Vol(B_x(r))$ the growth  function. 
The {\em entropy} of $M$ is defined by
$$
E = \limsup_{r\to\infty}\frac{1}{r}\log V(r).
$$

A first consequence of asymptotic harmonicity is the following linear
isoperimetric inequality :
\begin{prop}
    Let $M^{n+1}$ be an asymptotically harmonic CH-manifold.
    
    \noindent   For any domain $\Omega  \! \subset \! M$ with smooth boundary
    $\partial\Omega$ we have $nh\Vol(\Omega) \! \le \! \vol(\partial\Omega)$.
    \label{isop}
\end{prop}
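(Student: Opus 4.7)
The plan is to use the Busemann function as a test function in the divergence theorem; the crucial feature of asymptotic harmonicity is that $\Delta b_\xi$ is the \emph{constant} $-nh$.

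First, pick any point $\xi \in \partial_\infty M$ and a Busemann function $b_\xi$. From the discussion following Proposition~\ref{prop_hessian}, asymptotic harmonicity is equivalent to $\Delta b_\xi = -nh$ everywhere on $M$; in the sign convention of the paper this means $\mathrm{div}(\nabla b_\xi) = nh$ pointwise. Moreover, $|\nabla b_\xi| = 1$. These are the only two pieces of information one needs.

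Next, I apply the divergence theorem on the smooth domain $\Omega$ to the vector field $\nabla b_\xi$. This yields
$$
nh \, \mathrm{Vol}(\Omega) \;=\; \int_\Omega \mathrm{div}(\nabla b_\xi)\, dv \;=\; \int_{\partial\Omega} \langle \nabla b_\xi, \nu_\Omega\rangle\, d\sigma,
$$
where $\nu_\Omega$ is the outward unit normal of $\partial\Omega$. Since $|\nabla b_\xi|=1$, the integrand on the right is pointwise bounded by $1$ in absolute value, so
$$
\int_{\partial\Omega} \langle \nabla b_\xi, \nu_\Omega\rangle\, d\sigma \;\le\; \int_{\partial\Omega} |\nabla b_\xi|\, d\sigma \;=\; \mathrm{vol}(\partial\Omega),
$$
and the inequality $nh\,\mathrm{Vol}(\Omega) \le \mathrm{vol}(\partial\Omega)$ follows at once.

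There is no real obstacle: the only technical point is to be sure that $b_\xi$ is regular enough to apply the divergence theorem, and this was already observed in \S1 (Busemann functions are at least $C^2$ on a CH-manifold, and under the asymptotic harmonicity assumption even more regular). Note that one can alternatively argue from Lemma~\ref{main}: with $\nabla r_{x_0}$ in place of $\nabla b_\xi$ for a point $x_0$ outside $\Omega$, the divergence theorem gives $\int_\Omega n h_{x_0}\, dv \le \mathrm{vol}(\partial\Omega)$, and letting $x_0 \to \xi$ along a geodesic recovers the same estimate in the limit; but the Busemann argument is a one-line application of the equality case of Lemma~\ref{main}.
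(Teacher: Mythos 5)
Your proof is correct and is essentially the paper's own argument: fix a point at infinity, note that asymptotic harmonicity gives $\operatorname{div}(\nabla b_\xi)=-\Delta b_\xi=nh$, and apply the divergence theorem on $\Omega$ together with $|\nabla b_\xi|=1$. The only difference is that you spell out the one-line integration by parts that the paper leaves implicit.
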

\begin{proof}
    Fix some $\xi\in\dM$. Since $-\Delta b_\xi = nh$, integrating by parts on
    $\Omega$ the function $-\Delta b_\xi$ gives the result.
\end{proof}

\begin{thm}\label{thm_entropy}
    Let $M^{n+1}$ be an asymptotically harmonic CH-manifold.
    
    \noindent The entropy of $M$ is $E=nh$.
\end{thm}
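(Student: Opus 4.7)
The plan is to establish $E\le nh$ and $E\ge nh$ separately, each obtained from one of the two preceding results by a single integration.

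For the upper bound, I work in geodesic normal coordinates centred at $x$. Writing the Riemannian volume element as $A(r,u)\,dr\,du$ with $u$ ranging over the unit sphere in $T_xM$, the Jacobi equation gives the standard identity
$$\frac{\partial}{\partial r}\log A(r,u)=\Delta r_x\bigl(\exp_x(ru)\bigr)=n\,h_x\bigl(\exp_x(ru)\bigr),$$
the second equality being the fact (recalled in \S1) that the Laplacian of the distance function equals the trace of the Hessian, hence $n$ times the mean curvature of the corresponding geodesic sphere with respect to the outward normal $\nabla r_x$. By Lemma \ref{main}, $\partial_r\log A(r,u)\le nh+n/r$ pointwise, so integrating from a fixed $r_0>0$ up to $r$ yields
$$A(r,u)\le A(r_0,u)\,(r/r_0)^{n}\,\ex^{nh(r-r_0)}.$$
Integrating over the unit sphere in $T_xM$ and then over $r$ produces $V(r)\le C\,r^{n+1}\,\ex^{nhr}$ for some constant $C$ depending on $x$ and $r_0$, whence $E=\limsup_{r\to\infty}\frac{1}{r}\log V(r)\le nh$.

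For the lower bound, I apply Proposition \ref{isop} to $\Omega=B_x(r)$, whose boundary $S_x(r)$ is smooth for every $r>0$. This gives the differential inequality
$$n\,h\,V(r)\le \vol\bigl(S_x(r)\bigr)=V'(r)$$
for all $r>0$. Integrating (e.g. via $(\log V)'\ge nh$) yields $V(r)\ge V(r_0)\,\ex^{nh(r-r_0)}$ for any $r>r_0>0$, and therefore $E\ge nh$.

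No real obstacle is expected: the theorem is a direct synthesis of Lemma \ref{main} and Proposition \ref{isop}. The only minor care needed is to begin the radial integration at a fixed $r_0>0$, so as to avoid the singularity of $h_x(y)$ as $y\to x$; the polynomial prefactor $r^{n+1}$ appearing in the upper bound on $V(r)$ is then harmlessly absorbed by the $\limsup$ defining $E$.
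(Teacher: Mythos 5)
Your proof is correct and follows essentially the same route as the paper: the lower bound is identical (integrating the isoperimetric inequality of Proposition \ref{isop} applied to balls), and the upper bound rests on Lemma \ref{main} exactly as in the paper, the only difference being that you integrate the pointwise bound $h_x\le h+1/r$ on the radial density and absorb the resulting polynomial prefactor $r^{n+1}$ into the $\limsup$, whereas the paper fixes $r_0=1/\varepsilon$ and integrates $V''\le n(h+\varepsilon)V'$ twice. Both variants are sound and of the same depth.
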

\begin{proof}
    By the co-area formula we have $V'(r)=\vol(S_x(r))$, and by
    proposition~\ref{isop} we get $nhV(r)\le V'(r)$. Integrating this
    inequality we get $V(r) \ge A\ex^{nhr}$ for some constant $A$, so
    that the entropy is bounded below by $nh$.

    Now, the second derivative of $V$ is given by
    $V''(r)=n\int_{S_x(r)}h_x(y)dv_r(y)$
    where $dv_r$ is the volume form of $S_x(r)$.
    Choose $\varepsilon>0$ and let $r_0=\frac{1}{\varepsilon}$. By
    lemma~\ref{main}, we have $V''(r)\le n(h+\varepsilon)V'(r)$
    for any $r\ge r_0$. Integrating this inequality between $r_0$
    and $r$, yields $V'(r)\le A\ex^{n(h+\varepsilon)r}$ for
    some constant $A$. Integrating once again between $r_0$
    and $r$, we get $V(r)\le B + C\ex^{n(h+\varepsilon)r}$,
    which implies that $E\le n(h+\varepsilon)$.
    Since $\varepsilon$ is arbitrarily small, this concludes the proof.
\end{proof}

\begin{thm}\label{thm_spectrum}
    Let $M^{n+1}$ be an asymptotically harmonic CH-manifold.
    
    \noindent The spectrum of the Laplacian of $M$ is
    $\sigma(\Delta) = [\frac{n^2h^2}{4},+\infty)$
\end{thm}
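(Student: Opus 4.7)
The plan is to prove the two inclusions $\sigma(\Delta)\subseteq[n^2h^2/4,+\infty)$ and $[n^2h^2/4,+\infty)\subseteq\sigma(\Delta)$ separately. For the first, I would apply Cheeger's inequality to the linear isoperimetric inequality of Proposition~\ref{isop}: the latter asserts that the Cheeger isoperimetric constant of $M$ satisfies $h_C(M)\ge nh$, so Cheeger's inequality $\inf\sigma(\Delta)\ge h_C(M)^2/4$ immediately yields $\inf\sigma(\Delta)\ge n^2h^2/4$.

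For the reverse inclusion the strategy is to construct Weyl sequences realising each $\lambda\ge n^2h^2/4$ as a spectral value. Fix $\xi\in\dM$, write $b=b_\xi$, and for each $\mu\ge 0$ consider the complex-valued function $F_\mu(x)=\exp\bigl((-\tfrac{nh}{2}+i\mu)\,b(x)\bigr)$. Using $|\nabla b|=1$ and $\Delta b=-nh$, the chain rule gives $\Delta F_\mu=-\alpha(\alpha+nh)F_\mu$ with $\alpha=-\tfrac{nh}{2}+i\mu$, and since $\alpha(\alpha+nh)=-(\tfrac{n^2h^2}{4}+\mu^2)$ this shows $\Delta F_\mu=\lambda_\mu F_\mu$ with $\lambda_\mu:=\tfrac{n^2h^2}{4}+\mu^2$; as $\mu$ varies in $[0,+\infty)$, $\lambda_\mu$ sweeps $[n^2h^2/4,+\infty)$. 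Since $F_\mu\notin L^2(M)$, the point is to find cut-offs $\chi_k$ such that $u_k=\chi_k F_\mu$ satisfies $\|(\Delta-\lambda_\mu)u_k\|_{L^2}/\|u_k\|_{L^2}\to 0$.

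To compute error terms cleanly I would introduce horospherical coordinates $(y,s)\in H_0\times\R$ adapted to $\xi$, where $s=b(x)$ and $y(x)\in H_0:=b^{-1}(0)$ is the foot of the gradient line of $b$ through $x$. A Jacobi-field computation based on $\tr\hess b=nh$ gives $dv=e^{nhs}\,dA_0(y)\,ds$, and in particular $|F_\mu|^2\,dv=dA_0(y)\,ds$ is a product measure. Taking $\chi_k(x)=\eta_1((s-k)/k)\,\eta_2(y)$ with $\eta_1\in C_c^\infty([1,2])$ and $\eta_2\in C_c^\infty(H_0)$ fixed, one obtains $\|u_k\|_{L^2}^2\asymp k$, while the commutator
\[
(\Delta-\lambda_\mu)(\chi_k F_\mu) = F_\mu\,\Delta\chi_k - 2\langle\nabla\chi_k,\nabla F_\mu\rangle
\]
splits into a vertical piece involving $\eta_1'$ (trivially $o(k^{1/2})$ in $L^2$ after the change of variables) and a horizontal piece involving the intrinsic horosphere Laplacian $\Delta_{H_s}\eta_2$.

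The main obstacle will be to make the horizontal contribution $o(\|u_k\|_{L^2})$, since \emph{a priori} the induced metric $g_s$ on horospheres can distort anisotropically under the flow of $\nabla b$ when no curvature lower bound is assumed. Here one uses the crucial fact that horospheres are convex with fixed trace $\tr\hess b=nh$, so the eigenvalues of $\hess b$ lie in $[0,nh]$ and the induced metric evolves monotonically: $g_s\ge g_0$ as tensors on $H_0$ for all $s\ge0$, since $\partial_s g_s=2\,\hess b|_{TH_s}\ge 0$. This forces the pointwise bound $|\nabla_{g_s}\eta_2|_{g_s}\le|\nabla_{g_0}\eta_2|_{g_0}$ and an analogous quadratic-form comparison for $\Delta_{g_s}\eta_2$; combined, if needed, with a $k$-dependent rescaling of $\eta_2$, this is enough to close the Weyl-sequence estimate and conclude $\lambda_\mu\in\sigma(\Delta)$.
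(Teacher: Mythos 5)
Your first inclusion (Cheeger's inequality combined with the linear isoperimetric inequality of Proposition \ref{isop}) is exactly the paper's argument, and your computation $\Delta F_\mu=\lambda_\mu F_\mu$ for $F_\mu=\ex^{(-\frac{nh}{2}+i\mu)b_\xi}$ is correct with the paper's sign conventions. The gap is in the cut-off estimate for the Weyl sequence. Because horospheres are noncompact (so the slab $\{2k\le b_\xi\le 3k\}$ has infinite volume), you are forced to cut off horizontally with $\eta_2$, and the dangerous error term is $\eta_1\,F_\mu\,\Delta^{H_\xi(s)}\tilde\eta_2$, the \emph{intrinsic} Laplacian of the flow-extended horizontal cut-off on the far horospheres, $s\in[2k,3k]$. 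The monotonicity $g_s\ge g_0$ (which does follow from convexity of horospheres) controls only first-order quantities: it gives $|\nabla^{g_s}\tilde\eta_2|_{g_s}\le|\nabla^{g_0}\eta_2|_{g_0}$, but there is no ``analogous quadratic-form comparison'' for $\Delta^{g_s}\tilde\eta_2$. The second-order operator involves horizontal derivatives of the evolving metric (Christoffel symbols), i.e.\ tangential derivatives of the principal curvatures of the horospheres; asymptotic harmonicity fixes only their trace, and Theorem \ref{thm_spectrum} assumes neither pinched curvature nor bounded $DR_M$, so these derivatives are uncontrolled and $\Delta^{g_s}\tilde\eta_2$ may a priori grow with $s$. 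Moreover, even a uniform bound would not close the argument: a bounded horizontal term contributes $\asymp k^{1/2}$ in $L^2$, the same order as $\|u_k\|$ (since $\|u_k\|^2\asymp k$), so you need genuine decay in $s$ — true in constant curvature, where $\Delta^{g_s}=\ex^{-2as}\Delta^{g_0}$, but not established here — and the proposed $k$-dependent rescaling of $\eta_2$ cannot be carried out without precisely the missing pointwise estimate, plus volume-growth control of $(H_0,g_0)$ which is also unavailable without a strict upper curvature bound.

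This is exactly why the paper works with the distance function $r_x$ instead of a Busemann function: Lemma \ref{main} gives $|\Delta r_x-nh|\le \frac{n}{R}$ outside $B_x(R)$, one has $|\nabla r_x|=1$, and radial test functions supported in annuli are automatically compactly supported, so no horizontal cut-off (hence no control of intrinsic horosphere Laplacians) is ever needed; Donnelly's method, in the form of Kumura's theorem, then yields $[\frac{n^2h^2}{4},+\infty)\subset\sigma(\Delta)$ directly. If you wish to keep the Busemann-function route, you must either prove a quantitative bound (with decay) for $\Delta^{H_\xi(s)}$ applied to pulled-back cut-offs — which seems to require extra hypotheses such as bounded $DR_M$ or pinched curvature, not granted by the theorem — or redesign the test functions so that only $\Delta b_\xi$ and $|\nabla b_\xi|$ enter, which the infinite volume of the horospherical slabs prevents.
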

\begin{proof}
    By proposition \ref{isop} and Cheeger's inequality, we have
    $\sigma(\Delta)\subset[\frac{n^2h^2}{4},+\infty)$.

    Conversely, we choose $x\in M$ and consider 
    the distance function $r_x$ to $x$. Since the Laplacian of $r_x$ is
    given by the mean curvature of spheres, we have
    \begin{equation} \label{eqdonnelly}
    	\sup_{y\in M \! \setminus \! B_x(R) } \, \{ \, |\Delta r_x(y)-nh|\  \} \;
    	\le \; \frac{n}{R}
    \end{equation}
    Using (\ref{eqdonnelly}) and the fact that   $|\nabla r_x|=1$, we can follow the
    method initiated by H. Donnelly
    to determine the essential spectrum (cf. \cite{Donnelly}) : for each
    $\lambda>\frac{n^2h^2}{4}$ we use radial functions to construct sequences
    satisfying Weyl's criterion for $\lambda$ (cf. \cite{Reed-Simon}
    theorem VII.12 p. 237).
    See for example \cite{Kumura} theorem 1.2
    for a general result, whose hypotheses are satisfied by the function $r_x$.
\end{proof}
\begin{rem}
	From theorems \ref{thm_entropy} and \ref{thm_spectrum} we deduce
	$\inf \{ \sigma(\Delta) \}=\frac{E^2}{4}$. 
	
	For cocompact negatively curved manifolds, this equality is  equivalent to the
	asymptotic harmonicity (cf. \cite{Ledrappier} theorem 1).
	But, in the general case, it is easy to construct manifolds satisfying this
	inequality, which are not asymptotically harmonic. For example, the conclusions
	of theorems \ref{thm_entropy} and \ref{thm_spectrum} hold true for any
	Cartan-Hadamard manifold with curvature less than $-h^2$ and tending to
	$-h^2$ at infinity.
\end{rem}

\subsection{Rigidity}

Consider the second fundamental form $\A_{\xi}$ of a horosphere $H_{\xi}$,  and let
$\lambda_1,\dots,\lambda_n$ be the principal curvatures
of $H_{\xi}$ at some point $x$, with respect to the inner unit normal of $H_{\xi}$.
If  $M$ satisfies the curvature upper bound $K_M\le-a^2$, then it is well known that
$\lambda_i\ge a$ (cf \cite{Karcher}). Therefore we get
$$
n^2h^2 = (\sum_i\lambda_i)^2 = \sum_i\lambda_i^2 + 2\sum_{i<j}\lambda_i\lambda_j
	 \ge |\A_{\xi}|^2 + n(n-1)a^2,
$$
and
\begin{equation} \label{eqn_bound-A}
	|\A_{\xi}|^2 \le n^2h^2 - n(n-1)a^2.
\end{equation}
When assuming a curvature lower bound $K_M\ge-b^2$, a similar argument gives
\begin{equation}
	|\A_{\xi}|^2 \ge n^2h^2 - n(n-1)b^2.
\end{equation}

Now, as the mean curvature is the same for all horospheres, taking the
trace of Riccati equation (\ref{eqn_Riccati}) gives $|\A_{\xi}|^2 + \Ric_M(u,u)=0$
for any $u\in SM$, for the second fundamental form $\A_{\xi}$   of a horosphere
tangent to $u^\bot$. Therefore we get :
\begin{prop} \label{prop_bound-Ric}
	Let $M^{n+1}$ be an asymptotically harmonic CH-manifold. For any
	$u\in SM$ we have
	\begin{enumerate}
		\item if $M$ satisfies  $K_M\le-a^2$, then
			$\Ric_M(u,u) \ge -n^2h^2 + n(n-1)a^2$;
		\item if $M$ satisfies  $K_M\ge-b^2$, then
			$\Ric_M(u,u) \le -n^2h^2 + n(n-1)b^2$.
	\end{enumerate}	
\end{prop}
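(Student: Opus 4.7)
My plan is to combine the algebraic bounds on $|\A_\xi|^2$ already displayed in the text immediately preceding the statement with the identity $\Ric_M(u,u) = -|\A_\xi|^2$, obtained by tracing the Riccati equation and invoking asymptotic harmonicity.

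Given $u \in S_xM$, I would realize $u$ as (minus) the inner unit normal of a horosphere at $x$: take the geodesic $\gamma$ with $\dot\gamma(0) = -u$, set $\xi = \lim_{t\to -\infty}\gamma(t) \in \dM$, and consider the shape operators $A_\xi(t)$ of the horospheres centred at $\xi$ through $\gamma(t)$, with respect to the inner normal $\nu = -\nabla b_\xi = -\dot\gamma(t)$. Taking the trace of the Riccati equation \eqref{eqn_Riccati}, asymptotic harmonicity forces $\tr A_\xi(t) = nh$ to be constant in $t$, so $\tr A_\xi'(t) = 0$; since $A_\xi$ is self-adjoint, $\tr A_\xi^2 = |\A_\xi|^2$; and $\tr R_M(\dot\gamma,\cdot)\dot\gamma = \Ric_M(\dot\gamma,\dot\gamma)$ by definition. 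Evaluating at $t = 0$ and using $\Ric_M(-u,-u) = \Ric_M(u,u)$ yields
$$
\Ric_M(u,u) = - |\A_\xi|^2(x).
$$

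Inserting the previously established bounds $|\A_\xi|^2 \le n^2 h^2 - n(n-1) a^2$ under $K_M \le -a^2$ and $|\A_\xi|^2 \ge n^2 h^2 - n(n-1) b^2$ under $K_M \ge -b^2$ produces both claims at once. There is no genuine obstacle here: the horosphere principal-curvature bounds $\lambda_i \ge a$ (resp.\ $\lambda_i \le b$) come from standard Jacobi field comparison under the sectional-curvature hypothesis, the trace-of-Riccati identity is a routine calculation, and the only minor care needed concerns the sign convention for $\nu$, which is built into $A_\xi(t)$ by construction.
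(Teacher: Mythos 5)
Your argument is correct and is essentially the paper's own proof: the text preceding the proposition derives $|\A_\xi|^2 \le n^2h^2 - n(n-1)a^2$ (resp.\ $\ge n^2h^2 - n(n-1)b^2$) from the principal-curvature comparison $\lambda_i \ge a$ (resp.\ $\le b$), and then traces the Riccati equation (\ref{eqn_Riccati}), using $\tr A_\xi \equiv nh$ to obtain $|\A_\xi|^2 + \Ric_M(u,u) = 0$, exactly as you do. The only detail you spell out more explicitly than the paper is the sign bookkeeping for $\nu$ and the quadratic invariance $\Ric_M(-u,-u)=\Ric_M(u,u)$, which is harmless.
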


As a consequence, we have the following characterization of constant
curvature spaces :
\begin{cor} \label{cor_rigidity}
    Let $M^{n+1}$ be an asymptotically harmonic CH-manifold.
	\begin{enumerate}
		\item if $M$ satisfies  $K_M\le-a^2$ then $h\ge a$, and
			$h=a$ if and only if $M=\M^{n+1}(-a^2)$;
		\item if $M$ satisfies   $K_M\ge-b^2$ then $h\le b$, and
			$h=b$ if and only if  $M=\M^{n+1}(-b^2)$.
	\end{enumerate}	
\end{cor}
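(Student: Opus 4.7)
The plan is to derive the rigidity corollary directly from Proposition \ref{prop_bound-Ric}, combining the lower bound on $|\A_\xi|^2$ (coming from the mean curvature $h$ via the Cauchy--Schwarz / AM--QM type inequality on the principal curvatures) with the elementary fact that $\Ric_M(u,u)$ is a sum of $n$ sectional curvatures in planes containing $u$.

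For part (i), I would reason as follows. Fix $u \in SM$. Since $K_M \le -a^2$, the Ricci curvature satisfies $\Ric_M(u,u) \le -na^2$, as it is the trace of the sectional curvatures on planes through $u$. On the other hand, Proposition \ref{prop_bound-Ric}(i) gives $\Ric_M(u,u) \ge -n^2 h^2 + n(n-1)a^2$. Chaining these two inequalities yields
\[
-n^2 h^2 + n(n-1)a^2 \le -n a^2,
\]
which simplifies to $h^2 \ge a^2$, i.e.\ $h \ge a$. If equality $h=a$ holds, then the chain is an equality for every $u \in SM$, forcing $\Ric_M(u,u) = -na^2$. Since each of the $n$ sectional curvatures appearing in the trace is bounded above by $-a^2$, their sum equalling $-na^2$ forces each of them to equal $-a^2$. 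Hence $M$ has constant sectional curvature $-a^2$; being a complete simply connected Riemannian manifold of constant curvature $-a^2$, $M$ is isometric to $\M^{n+1}(-a^2)$. The converse implication is immediate (in the model space, horospheres have mean curvature exactly $a$).

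Part (ii) is completely symmetric: $K_M \ge -b^2$ gives $\Ric_M(u,u) \ge -nb^2$, while Proposition \ref{prop_bound-Ric}(ii) gives $\Ric_M(u,u) \le -n^2 h^2 + n(n-1)b^2$. Combining them yields $h \le b$. Equality forces every sectional curvature at every point to equal $-b^2$, hence $M = \M^{n+1}(-b^2)$.

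There is essentially no obstacle here, since all the heavy lifting is already done in Proposition \ref{prop_bound-Ric}; the corollary is a clean algebraic manipulation plus the classical equality-case analysis for sectional/Ricci comparison. The only point deserving a line of care is the equality case: one must observe that $\Ric_M(u,u)$ being the \emph{sum} of $n$ sectional curvatures, each of which is constrained from one side by the global bound, collapses to the constant curvature condition as soon as the Ricci bound is saturated everywhere on $SM$. After that, simply connectedness plus completeness (built into the definition of a CH-manifold) upgrades constant curvature to isometry with the model space $\M^{n+1}(-a^2)$ or $\M^{n+1}(-b^2)$.
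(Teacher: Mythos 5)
Your proof is correct and follows essentially the same route as the paper: the rigidity in the equality case is obtained exactly as in the paper's proof, by combining Proposition \ref{prop_bound-Ric} with the fact that $\Ric_M(u,u)$ is a sum of $n$ sectional curvatures each bounded by $-a^2$ (resp.\ $-b^2$). The only cosmetic difference is that you derive the inequality $h\ge a$ (resp.\ $h\le b$) by chaining the Ricci bounds, whereas the paper invokes directly the classical comparison $\lambda_i\ge a$ for the principal curvatures of horospheres; since Proposition \ref{prop_bound-Ric} is itself proved from that comparison, the two derivations rest on the same fact.
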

\begin{proof}
    The curvature upper bound $K_M\le-a^2$ implies $h\ge a$.
    If $h=a$, then proposition \ref{prop_bound-Ric} gives $\Ric_M\ge-na^2$, and
    since the Ricci curvature is a sum of $n$ sectional curvatures which are not
    greater then $-a^2$, this implies that all the sectional curvatures are
    equal to $-a^2$.  The proof is the same when assuming a curvature lower bound.
\end{proof}

\subsection{Growth of horospheres}

It is well known that, on CH-manifolds with {\em pinched} curvature, horospheres
have polynomial volume growth, whose degree depend on the bounds on
the curvature (cf. \cite{Karp-Peyerimhoff}). We will now see that, under the
asymptotic harmonicity assumption, an upper bound   $K_M \le -a^2 < 0$ is enough to
estimate from above the polynomial growth of horospheres.

Let $H_{\xi}$ be a horosphere centered in some point at infinity $\xi$, let $b_\xi$
be the Busemann function vanishing on $H_{\xi}$, and let $g_0$ be the Riemannian
metric induced on $H_{\xi}$.   For each $t\in\R$, there is a natural diffeomorphism
$\varphi_t:H_\xi \to H_\xi(t)$ defined by $\varphi_t(x)=\exp_x(t\nabla b_\xi)$, which
in turns induces a diffeomorphism
$$
\Phi\left\{\begin{array}{rcl}
                \R\times H_\xi(0) & \to & M \\
                (t,x) & \mapsto & \varphi_t(x)
           \end{array}\right.
$$
In these ``horospherical'' coordinates $(t,x)$, the metric of $M$ reads
$g = dt^2 + g_t $, where $g_t = \varphi_t^* g_{H_\xi(t)} $ and $g_{H_\xi(t)}$ is the
induced Riemannian metric of $H_{\xi} (t)$.
 
When assuming a sectional curvature upper bound $K\le-a^2$, the map $\varphi_t$  
increases the distance for $t>0$ and decreases the distance if $t<0$.
In fact, as a consequence of comparison theorem for Jacobi fields, we have that all
the eigenvalues of   $d\varphi_t$ are greater than or equal to $\ex^{at}$ if $t>0$,
and less than or equal to $\ex^{at}$ if $t<0$ (cf. \cite{Heintze-Hof}).

Now, it is a standard fact that the mean curvature gives the derivative of the volume
form of a submanifold under a deformation. 
In our setting, if   $dv_t = J_t(x)dv_0$ is the volume form of the metric $g_t$ and
$J_t(x)$ is the density of $dv_t$ with respect to $dv_0$, we have 
$J_t ' =nh_t J_t$, where $h_t$ is the mean curvature of $H_\xi(t)$. 
By asymptotic harmonicity, we deduce that $dv_t=\ex^{nht}dv_0$ for all $t$;
therefore, in horocyclic coordinates  the volume form of $M$ reads
$dv_M=\ex^{nht}dtdv_0$.

On the other hand, by theorem \ref{thm_entropy}, the volume entropy of $M$ is $nh$:
heuristically,  this means that the exponential rate of the volume growth of $M$
comes from the behaviour of the volume form in the $\R$ direction, and that the
volume growth of the slices $H_\xi(t)$ should be subexponential. Namely:

\begin{thm} \label{thm_growth}
	Let $M^{n+1}$ be an asymptotically harmonic CH-manifold with sectional
	curvature upper bound $K_M\le-a^2<0$. Then, there exists a constant $C$
	(depending only on $n$, $a$ and $h$) such that,
    for any horosphere $H$ of $M$, the balls of $H$ satisfy
    $\vol(B^H_x(r))\le Cr^{\frac{nh}{a}}$ for all $r>0$.
\end{thm}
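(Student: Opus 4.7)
The plan is to exploit the horocyclic decomposition introduced just before the statement. In coordinates $(t,x)\in\R\times H_\xi(0)$, asymptotic harmonicity yields $dv_t=\ex^{nht}\,dv_0$, while the upper curvature bound $K_M\le -a^2$ forces the diffeomorphism $\varphi_t:H_\xi(0)\to H_\xi(t)$ to be $\ex^{at}$-Lipschitz for $t\le 0$ (by the standard Jacobi field comparison recalled in the preamble). Fixing $H=H_\xi(0)$, $x_0\in H$ and $r\ge 1$, the Lipschitz property gives
$$
\varphi_t\bigl(B^H_{x_0}(r)\bigr)\subseteq B^{H_\xi(t)}_{\varphi_t(x_0)}(r\ex^{at}),
$$
while the volume rescaling gives $\vol_{H_\xi(t)}(\varphi_t(A))=\ex^{nht}\vol_H(A)$ for any Borel $A\subseteq H$. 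Choosing $t=-\tfrac{1}{a}\log r$ makes $r\ex^{at}=1$ and yields the key estimate
$$
\vol\bigl(B^H_{x_0}(r)\bigr)\le r^{nh/a}\,\vol\bigl(B^{H_\xi(t)}_{\varphi_t(x_0)}(1)\bigr),
$$
reducing the theorem to a uniform upper bound on unit horospherical balls, independent of the horosphere and basepoint.

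To obtain such a uniform bound I would apply Bishop--Gromov on the horosphere after extracting an intrinsic Ricci lower bound depending only on $n,h,a$. The Gauss equation reads, for a unit $X$ tangent to a horosphere $H'$ with shape operator $A$ and inner normal $\nu$,
$$
\Ric^{H'}(X,X)=\Ric^M(X,X)-K_M(X,\nu)+nh\,\langle AX,X\rangle-|AX|^2,
$$
and every term on the right is controlled. The traced Riccati identity of \S 3.2 gives $\Ric^M(u,u)=-|\A_\xi|^2$ for every $u\in SM$. The bound $K_M\le -a^2$ forces (by the same Jacobi comparison) the principal curvatures of $H'$ to satisfy $\lambda_i\ge a$; combined with $\sum\lambda_i=nh$ this yields $a\le\lambda_i\le nh-(n-1)a$, whence $|\A_\xi|^2\le n^2h^2$, $\langle AX,X\rangle\ge a$, and $|AX|^2\le n^2h^2$; moreover $-K_M(X,\nu)\ge 0$. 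Combining, $\Ric^{H'}\ge -C(n,h,a)$, and Bishop--Gromov furnishes a constant $C_1(n,h,a)$ with $\vol(B^{H'}_p(1))\le C_1$ for every horosphere $H'$ and every $p\in H'$.

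The main obstacle is precisely this uniform intrinsic Ricci control on horospheres: since the ambient sectional curvature is only bounded on one side, the decisive mechanism is the interplay of asymptotic harmonicity with the upper bound $K_M\le -a^2$, which---through the traced Riccati equation and the principal-curvature estimate $\lambda_i\ge a$---converts the one-sided hypothesis on $K_M$ into a two-sided control of the shape operators of horospheres, and hence into the Ricci bound required by Bishop--Gromov. With $C_1$ in hand, the key estimate immediately gives $\vol(B^H_{x_0}(r))\le C_1\,r^{nh/a}$ for $r\ge 1$, while the same unit-ball estimate $\vol(B^H_{x_0}(r))\le C_1$ handles the small-$r$ range after absorbing into the final constant.
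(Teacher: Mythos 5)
Your argument is correct and is essentially the paper's own proof: the same Gauss-equation plus traced-Riccati control of the intrinsic Ricci curvature of horospheres (using $\lambda_i\ge a$ and $\sum_i\lambda_i=nh$) feeding into Bishop's comparison to get a uniform bound on unit horospherical balls, followed by the same contraction $\varphi_t$ toward the center with $t=-\tfrac{1}{a}\log r$ and the volume factor $\ex^{nht}$. The only quibble --- shared with the paper, whose proof also takes $t=\tfrac{\ln r}{a}>0$ and hence only treats $r\ge 1$ --- is your closing claim that the small-$r$ range is absorbed into the constant: for $h>a$ one has $\vol(B^H_x(r))\sim \omega_n r^{n}$ while $r^{nh/a}=o(r^n)$ as $r\to 0$, so no uniform constant works for all $r>0$ and the estimate should be read as a statement for $r\ge 1$ (or $r\ge r_0$).
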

\begin{proof}
	Let $H=H_\xi$ be a horosphere centered in $\xi$. For any $u,v \in TH$, Gauss
	equation implies that
	$K_{H}(u,v)=K_M(u,v) + \A_{\xi} (u,u)\A_{\xi} (v,v)-\A_{\xi} (u,v)^2$, where
	$K_{H}$ and $K_M$ are the sectional curvatures of $H$ and $M$ respectively, and
	$\A_{\xi}$ is the second fundamental form of $H$. Taking the trace with respect
	to $v$ we get
	\begin{eqnarray*}
		\Ric_H(u,u) & = & \Ric_M(u,u)-K_M(u,\nu)+nh\A_{\xi}(u,u)-
			\sum_i\A_{\xi}(u,e_i)^2 \\
		 & \ge & \Ric_M(u,u) - |\A_{\xi}|^2  \ge  -2n^2h^2 + 2n(n-1)a^2
	\end{eqnarray*}
	where the last inequality comes from (\ref{eqn_bound-A}) and
	Proposition \ref{prop_bound-Ric}.
	Therefore, by Bishop's comparison theorem,
	there exists a constant $C$ (depending only on $n$, $a$ and $h$) such that, for
	any $x$ in  $H$ we have $\Vol(B^H_x(1))\le C$.
	
	Let now $x\in H$  and consider the map $\varphi_{-t}:H\to H_\xi(-t)$ defined
	above, for $t>0$. 
	As $K_M\le-a^2$, we have
	$\varphi_{-t}(B^H_x(r))\subset B^{H_{\xi}(-t)}_{\varphi_{-t}(x)}(\ex^{-at}r)$.
	Moreover, as $dv_{-t}=\ex^{-nht}dv_0$, we have
	$\vol(\varphi_{-t}(B^H_x(r)))=\ex^{-nht}\vol(B^H_x(r))$; so, choosing
	$t=\frac{\ln r}{a}$ we obtain
	$$
	\vol(B^H_x(r)))
		\le \ex^{nh\frac{\ln r}{a}}\vol(B^{H_{\xi}(-t)}_{\varphi_{-t}(x)}(1))
		\le Cr^{\frac{nh}{a}}
	$$
\end{proof}

\begin{rem} \label{rem_optimal}
	This theorem proves that the degree of the polynomial volume
	growth of  the horospheres is bounded above by $\frac{nh}{a}$. This
	upper bound is sharp, as it is the degree of the volume growth
	of the horospheres in the hyperbolic space (the horospheres being
	Euclidean in that case). Note that the upper bound is
	also sharp for the rank one symmetric spaces.
\end{rem}
\begin{rem} \label{rem_lower}
	Using a similar proof, it is easy to see that the  lower bound
	$-b^2 \le K_M \le 0$  gives a lower bound on the volume growth of the
	horospheres, namely 
	$ \vol(B^H_x(r)) \ge Cr^{\frac{nh}{b}}$. The proof is left to the reader.
\end{rem}
%

\subsection{The mean value property}

Harmonic manifolds are characterized by the fact that the harmonic functions
have the mean value property : for any harmonic function $F$ and any $R>0$,
$$
F(x_0) = \frac{1}{\vol (S_{x_0}(R))} \int_{S_{x_0}(R)} F(x) dv_{S_{x_0}(R)}
$$
This can be proved by taking the derivative of the right-hand side of the above
equality, and by observing that it vanishes for any harmonic function $F$ if
and only if the spheres have constant mean curvature.

In the following theorem we prove that harmonic functions on an asymptotically
harmonic manifold satisfy a mean value property, where, naturally, the mean
is taken on horospheres. As the horospheres are non-compact, the mean on
an horosphere is obtained as the limit of the means on an exhaustion.
The computations of these horospherical means are very similar to those
in \cite{Karp-Peyerimhoff}.

\begin{thm} \label{thm_meanvalue}
	Let $M^{n+1}$ be an asymptotically harmonic manifold with
    sectional curvature upper bound $K_M\le-a^2<0$, and  
    let $F$ be a function which is continuous on $M\cup\dM$ and harmonic on $M$.
    
    For any $\xi\in\dM$, any horosphere $H_\xi$ centered in $\xi$, and
    any $x\in H_\xi$, there
    exists a sequence $(r_j)_{j\in\N}$ tending to $+\infty$ such that
    $$
    \lim_{j\to\infty}\frac{1}{\Vol(B_x^{H_\xi}(r_j))}
    	\int_{B_x^{H_\xi}(r_j)}Fdv_{H_\xi} = F(\xi)
    $$
    where $B_x^{H_\xi}(R)$ denote the ball in $H_\xi$
    centered in $x$ of radius $R$.
\end{thm}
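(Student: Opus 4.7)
The plan is to reduce the theorem to two facts: (i) points of $H_\xi$ at large horospherical distance from $x$ converge to $\xi$ in the cone topology of $M\cup\dM$, so that continuity of $F$ gives $F(y_k)\to F(\xi)$ along any such sequence; and (ii) there exists a sequence $r_j\to\infty$ along which $V(r_j):=\vol(B_x^{H_\xi}(r_j))\to\infty$. Combined with a standard averaging argument, these force the horospherical mean along $r_j$ to converge to $F(\xi)$.

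For (i), take $y_k\in H_\xi$ with $d_{H_\xi}(x,y_k)\to\infty$. First I would argue that $d_M(x,y_k)\to\infty$: the induced Riemannian metric on $H_\xi$ is complete (so $(H_\xi,d_{H_\xi})$ is proper by Hopf-Rinow), hence $H_\xi\cap\overline{B}_M(x,R)$ is compact with finite horospherical diameter for every $R$, which forces $d_M(x,y_k)\to\infty$ as claimed. Next I would show the angle at $x$ tends to zero: applying Toponogov's comparison with $K_M\le -a^2$ to the triangle $(x,y_k,\sigma(s))$, where $\sigma$ is the geodesic ray from $x$ to $\xi$, and using $d(y_k,\sigma(s))=s+o_s(1)$ as $s\to\infty$ (a consequence of $b_\xi(y_k)=0$), the hyperbolic cosine law in $\M^2(-a^2)$ yields in the limit $s\to\infty$
\[
\cos\bigl(\angle_x(y_k,\xi)\bigr) \;\ge\; \tanh\!\bigl(a\,d_M(x,y_k)/2\bigr) \;\xrightarrow{k\to\infty}\; 1,
\]
so $\angle_x(y_k,\xi)\to 0$. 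Together with $d_M(x,y_k)\to\infty$, this is precisely convergence to $\xi$ in the cone topology.

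Granted (i), for every $\epsilon>0$ continuity of $F$ on $M\cup\dM$ produces $R>0$ such that $|F-F(\xi)|<\epsilon$ on $H_\xi\setminus B_x^{H_\xi}(R)$. Splitting the integral defining $G(r):=\frac{1}{V(r)}\int_{B_x^{H_\xi}(r)}F\,dv_{H_\xi}$ gives
\[
\bigl|G(r)-F(\xi)\bigr| \;\le\; \epsilon \;+\; \frac{\|F-F(\xi)\|_{L^\infty(\overline{B_x^{H_\xi}(R)})}\,V(R)}{V(r)}.
\]
Choosing any sequence $r_j\to\infty$ with $V(r_j)\to\infty$ drives the second term to zero, and the arbitrariness of $\epsilon$ yields $G(r_j)\to F(\xi)$. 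I expect the main obstacle to be the existence of such a sequence in the absence of a curvature lower bound: when $K_M\ge -b^2$ is available, Remark \ref{rem_lower} gives $V(r)\ge Cr^{nh/b}\to\infty$ immediately; in general one must show $\vol(H_\xi)=\infty$ by a separate argument based on the horospherical volume form $dv_M=e^{nht}dt\,dv_0$ combined with the expansion estimate $|d\varphi_t|\ge e^{at}$ for $t\ge 0$, in the spirit of the computations of \cite{Karp-Peyerimhoff}.
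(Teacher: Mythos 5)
Your route is genuinely different from the paper's. The paper never looks at the values of $F$ on $H_\xi$ itself: it picks radii $r_j$ with $\vol(\partial B_x^{H_\xi}(r_j))/\Vol(B_x^{H_\xi}(r_j))\to 0$ (possible by the polynomial growth of Theorem \ref{thm_growth}), pushes the balls along the normal flow $\varphi_t$, and shows, using harmonicity of $F$, the constant mean curvature of horospheres and Yau's gradient estimate, that the averages $g_j(t)$ converge to a solution of $g''-nhg'=0$, hence to a constant, identified with $F(\xi)$ because deep horospheres $H_\xi(t)$ lie in small cone neighborhoods of $\xi$. You instead exploit continuity of $F$ at $\xi$ directly on $H_\xi$. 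Your step (i) is correct: the compactness argument does give $d_M(x,y_k)\to\infty$, and the comparison with $\M^2(-a^2)$ together with $d(y_k,\sigma(s))=s+o_s(1)$ (from $b_\xi(y_k)=0$) indeed yields $\cos\angle_x(y_k,\xi)\ge\tanh\bigl(a\,d_M(x,y_k)/2\bigr)$, which is exactly where $a>0$ enters. Granted step (ii), your argument is more elementary (no gradient estimate, no ODE/Arzela--Ascoli limit) and actually proves the stronger statement with the full limit $r\to\infty$ instead of a subsequence, i.e. precisely the improvement that the Remark following the theorem says the paper's own method does not reach.

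The genuine gap is step (ii), which you state but do not establish, and the tools you point to do not apply as stated: Remark \ref{rem_lower} and the Karp--Peyerimhoff-type lower bounds on horospherical balls require the lower curvature bound $-b^2\le K_M$, which is not among the hypotheses, and it is not clear how the expansion estimate for $d\varphi_t$ alone would lower-bound intrinsic balls of $H_\xi$ (relating ambient and intrinsic distance on a horosphere is exactly where a lower bound is normally used). The gap is, however, fillable without any lower bound. You only need $\vol(H_\xi)=\infty$, since then $V(r)$ increases to $+\infty$ and every sequence works. By asymptotic harmonicity $dv_M=\ex^{nht}\,dt\,dv_0$, so the slab $b_\xi^{-1}((0,1))$ has volume $\frac{\ex^{nh}-1}{nh}\vol(H_\xi)$. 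On the other hand, $H_\xi(1/2)$ is closed and noncompact, hence unbounded in $M$, so it contains points $p_i$ with mutual distances at least $1$; the pairwise disjoint balls $B_M(p_i,1/2)$ lie in the slab because $b_\xi$ is $1$-Lipschitz, and each has volume at least that of a Euclidean ball of radius $1/2$ by Günther's comparison, since $K_M\le 0$. Hence the slab, and therefore $H_\xi$, has infinite volume. With this inserted, your proof is complete and yields the conclusion for the full limit.
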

\begin{proof}
	Let $H_{\xi}$ be a horosphere centered in some point at infinity $\xi$,
	and let $\varphi_t:H_\xi \to H_\xi(t)$ be the diffeomorphism defined in
	\S 3.3.
	
	Choose $x\in H_\xi$. Because $H_\xi$ has polynomial volume growth, there
	exists a sequence $(r_j)_{j\in\N}$ tending to $+\infty$ such that
	$$
	\lim_{j\to\infty}\frac{\vol(\partial B_x^{H_\xi}(r_j))}{\Vol(B_x^{H_\xi}(r_j))}=0.
	$$
	
	For $t\in\R$ and $j\in\N$, let $\Omega_{j,t}=\varphi_t(B_x^{H_\xi}(r_j))$.
	As pointed out in \S 3.3, we have
	$\Vol(\Omega_{j,t})=\ex^{nht}\Vol(B_x^{H_\xi}(r_j))$. Moreover, the boundary
	of $\Omega_{j,t}$ satisfy
	$$
	\frac{d}{dt}\vol(\partial\Omega_{j,t}) = -(n-1)\int_{\partial\Omega_{j,t}}
		\langle\vec{k}_{j,t},\ddt\rangle
	$$
	where $\vec{k}_{j,t}$ is the mean curvature vector of $\partial\Omega_{j,t}$
	(seen as a submanifold of $M$).
	Taking an orthonormal basis $(e_1,\dots,e_{n-1})$ of $T\partial\Omega_{j,t}$
	and $\eta_{j,t}$ its exterior unit normal in $H_\xi(t)$ we have
	$$
	-(n-1)\langle\vec{k}_{j,t},\ddt\rangle =
		\sum_{i=1}^{n-1}\langle D^M_{e_i}\ddt,e_i\rangle
		= nh - \langle D^M_{\eta_{j,t}}\ddt,\eta_{j,t}\rangle
		\le nh - a
	$$
	where the last inequality comes from the curvature upper-bound on $M$.
	Therefore we have
	$\frac{d}{dt}\vol(\partial\Omega_{j,t})\le(nh-a)\vol(\partial\Omega_{j,t})$,
	and integrating this inequality we get
	$\vol(\partial\Omega_{j,t})\le\ex^{(nh-a)t}\vol(\partial\Omega_{j,0})$ and
	\begin{equation}\label{eqn_volume-quotient}
		\frac{\vol(\partial\Omega_{j,t})}{\Vol(\Omega_{j,t})} \le
			\ex^{-at}\frac{\vol(\partial B_x^{H_\xi}(r_j))}{\Vol(B_x^{H_\xi}(r_j))}.
	\end{equation}
	
	Consider now
	\begin{equation}\label{eqn_def-g_j}
		g_j(t) = \frac{1}{\Vol(\Omega_{j,t})}\int_{\Omega_{j,t}}Fdv_t
	\end{equation}
	where $dv_t$ is the volume form of $H_\xi(t)$ and $F$ a function which is
	continuous on $M\cup\dM$ and harmonic on $M$. In particular, $F$ is bounded.
	Using the fact that horospheres have constant mean curvature, we have
	\begin{equation}\label{eqn_g_j-prime}
		g_j'(t) = \frac{1}{\Vol(\Omega_{j,t})}\int_{\Omega_{j,t}}
			\langle\nabla F,\ddt\rangle dv_t
	\end{equation}
	and
	\begin{equation}\label{eqn_g_j-second}
		g_j''(t) = \frac{1}{\Vol(\Omega_{j,t})}\int_{\Omega_{j,t}}.
			(\hess^MF)(\ddt,\ddt) dv_t
	\end{equation}
	Using proposition \ref{prop_hessian} and the fact that $F$ is harmonic in
	$M$ we get
	$$
	(\hess^MF)(\ddt,\ddt) = - \tr((\hess^{H_\xi(t)}F)_{|_{TH_\xi(t)}})
		= \Delta^{H_\xi(t)}f + nh\langle \nabla F,\ddt \rangle
	$$
	where $f$ is the restriction of $F$ to $H_\xi(t)$.
	Equation (\ref{eqn_g_j-second}) gives
	\begin{eqnarray}\label{eqn_edo-g_j}
		g_j''(t) - nhg_j'(t)  & =  & \frac{1}{\Vol(\Omega_{j,t})}\int_{\Omega_{j,t}}
			\Delta^{H_\xi(t)}f dv_t \nonumber \\
		 & = & -\frac{1}{\Vol(\Omega_{j,t})}\int_{\partial\Omega_{j,t}}
			\langle\nabla F,\eta_{j,t}\rangle dv_t.
	\end{eqnarray}
	
	As $\Ric_M$ is bounded from below and $F$ is bounded on $M$, using Yau's gradient
	estimate for harmonic functions \cite{Yau}, there exists a constant $C$ (depending
	on $n$, $a$, $h$ and $||F||_\infty$) such that $|\nabla F|\le C$ on $M$.
	Therefore, using (\ref{eqn_volume-quotient}), the right-hand side
	of (\ref{eqn_edo-g_j}) satisfies
	$$
	\left|\frac{1}{\Vol(\Omega_{j,t})}\int_{\partial\Omega_{j,t}}
			\langle\nabla F,\eta_{j,t}\rangle dv_t\right| \le
			C\ex^{-at}\frac{\vol(\partial B_x^{H_\xi}(r_j))}{\Vol(B_x^{H_\xi}(r_j))}
	$$
	and tends uniformly to zero on bounded intervals when $j$ tends to $+\infty$.
	In particular, it implies that, on bounded intervals, the $C^0$ norms of the
	functions $g_j''$ are uniformly bounded.
	The fact that $F$ is bounded and Yau's gradient estimate also imply that
	the $C^0$ norms of the functions $g_j$ and $g_j'$
	are uniformly bounded, and, using Arzela-Ascoli convergence theorem,
	we have that, up to a subsequence, $(g_j)_{j\in\N}$ tends in $C^1$ topology to
	a function $g$.
	
	Moreover, multiplying (\ref{eqn_edo-g_j}) by a test function, integrating by part
	and letting $j$ tend to $+\infty$ we find that, in the sense of distributions, $g$
	is a solution of
	$$
	g''(t) - nhg'(t) = 0.
	$$
	Therefore, by classical regularity theory, $g$ is smooth and
	$g'(t)=g'(0)\ex^{nht}$.
	Since $g'$ is bounded on $\R$ we must have $g'\equiv 0$ and $g$ is constant.
	
	For any neighbourhood $U$ of $\xi$ (for the cone topology) there exist
	$t$ such that the horosphere $H_\xi(t)$ is contained in $U$. By continuity
	of $F$ on $M\cup\dM$ and by the definition of $g_j$, the value $g_j(t)$ can
	be made arbitrary close to $F(\xi)$ (for any $j$). Therefore
	we have $g(t)=F(\xi)$ for any $t\in\R$, and $g(0)=F(\xi)$ gives the result.
\end{proof}
\begin{rem}
	It would be better to have a similar result without taking a sequence of
	radii tending to infinity, that is to have
	$$
	\lim_{r\to\infty}\frac{1}{\Vol(B_x^{H_\xi}(r))}
		\int_{B_x^{H_\xi}(r)}Fdv_{H_\xi} = F(\xi).
	$$
	For the proof to work in that case, one need to have
	$\displaystyle\lim_{r\to\infty}
	\frac{\vol(\partial B_x^{H_\xi}(r))}{\Vol(B_x^{H_\xi}(r))}=0$.
	However, from the polynomial volume growth of horospheres one only get
	$\displaystyle\liminf_{r\to\infty}
	\frac{\vol(\partial B_x^{H_\xi}(r))}{\Vol(B_x^{H_\xi}(r))}=0$.
\end{rem}

%
%
%
%

\section{Asymptotic behaviour of the volume form}

In the previous section, in order to compute the entropy, we integrated the
inequalities of lemma \ref{lem_horosphere} on spheres. But since these inequalities
hold pointwise, we can try to determine the asymptotic behaviour of the volume form
at least in a fixed direction. Actually, let $\theta_x (u,r)$ be the density of the
volume form of $M$ in normal coordinates centered in some point $x$; so the volume
form reads $dv_M=\theta_x(u,r)dv_{S_xM}dr$, where $dv_{S_xM}$ is the volume form
of $S_xM$.

Harmonic manifolds are characterized by the fact that $\theta_x(u,r)$ only
depends on $r$. In this section we give a characterization of asymptotically
harmonic manifolds in term of the asymptotic behaviour of $\theta_x(u,r)$:
\begin{thm} \label{thm_characterization}
	Let $M$ be a CH-manifold with  $K_M \le -a^2<0$ and entropy $E$. \\
	$M$ is asymptotically harmonic if and only if there exists a
	positive function $\tau:SM\to\R_+$ such that $\theta_x (u,r)$ is
	uniformly equivalent to $\tau (u) \ex^{Er}$ for $r \to \infty$.
\end{thm}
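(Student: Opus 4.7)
My plan hinges on the identity $\partial_r \log \theta_x(u,r) = n h_x(\gamma_u(r))$, where $\gamma_u(r) := \exp_x(ru)$, combined with Lemma \ref{lem_horosphere}: at $y = \gamma_u(r)$ the sphere $S_x(r)$ is internal to the horosphere $H_\xi(y)$ centered at the \emph{backward} endpoint $\xi := \gamma_u(-\infty)$ (the two are tangent with common inner normal $-\gamma_u'(r)$), so taking traces in Lemma \ref{lem_horosphere} gives
\[
0 \;\le\; n h_x(\gamma_u(r)) - n h_\xi(\gamma_u(r)) \;\le\; n(\cot_a r - a).
\]
This comparison is the bridge between the sphere-density quantity $h_x$ and the horospherical quantity $h_\xi$; both implications rest on it.

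For the direct implication, asymptotic harmonicity gives $n h_\xi \equiv nh = E$ (Theorem \ref{thm_entropy}), so $\phi(u,r) := \log \theta_x(u,r) - Er$ satisfies $\phi'(u,r) \in [0, n(\cot_a r - a)]$. Since $\int^\infty (\cot_a s - a) \, ds$ converges, $\phi(u,\cdot)$ is non-decreasing and bounded, hence converges to some $\log \tau(u)$ with the tail estimate $|\log \tau(u) - \phi(u,r)| \le n\int_r^\infty (\cot_a s - a) \, ds$ independent of $u$. This yields the desired uniform equivalence $\theta_x(u,r) \sim \tau(u) \ex^{Er}$.

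For the converse, I apply the hypothesis at a family of basepoints drifting to a given ideal point. Fix $y_0 \in M$ and $v \in S_{y_0}M$; set $\sigma(t) := \exp_{y_0}(tv)$, $\xi := \sigma(-\infty)$, and for $t > 0$ take $x_t := \sigma(-t)$ and $v_t := \sigma'(-t)$, so that $\sigma(s) = \exp_{x_t}((t+s)v_t)$ and $\xi$ is the backward endpoint of the geodesic from $x_t$ in direction $v_t$. Applying the hypothesis at $(x_t,v_t) \in SM$ for radii $t$ and $t+s$ (both tending to infinity), uniformity over $SM$ gives
\[
\int_0^s n h_{x_t}(\sigma(s')) \, ds' \;=\; \log \frac{\theta_{x_t}(v_t,t+s)}{\theta_{x_t}(v_t,t)} \;=\; Es + o(1) \quad \text{as } t \to \infty,
\]
uniformly for $s$ on compact subsets of $[0,\infty)$. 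By the comparison above, $|n h_{x_t}(\sigma(s')) - n h_\xi(\sigma(s'))| \le n(\cot_a(t+s') - a) \to 0$ uniformly in $s'$; passing to the limit yields $\int_0^s n h_\xi(\sigma(s')) \, ds' = Es$ for all $s \ge 0$. Differentiating and using the continuity of $h_\xi$ gives $n h_\xi(\sigma(s)) \equiv E$; in particular $n h_\xi(y_0) = E$. Since $y_0$ and $v$ (hence $\xi$) were arbitrary, every horosphere has constant mean curvature $E/n$, so $M$ is asymptotically harmonic.

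The main obstacle will be the converse, which crucially requires interpreting the hypothesis as uniform convergence jointly over all of $SM$ (in both basepoint and direction), not merely over the fiber $S_xM$ for a single fixed basepoint. This is the natural reading since $\tau$ is a function on the whole of $SM$ and is exactly what the direct implication produces; it is precisely what enables the limit-exchange when $x_t$ drifts to infinity along $\sigma$, with Lemma \ref{lem_horosphere} controlling the error between $h_{x_t}$ and $h_\xi$ along the way.
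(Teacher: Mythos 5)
Your proposal is correct and follows essentially the paper's own route: the forward direction is Proposition \ref{prop_volum-form} (trace of Lemma \ref{lem_horosphere} giving $0\le \partial_r\log\theta - nh \le n(\cot_a r - a)$, hence monotone bounded convergence with a uniform tail), and your converse, with basepoints drifting backwards along the geodesic and the $\tau$-factors cancelling thanks to the uniformity over all of $SM$, is the same argument as Proposition \ref{prop_characterization} up to a reparametrization (integrating the Busemann parameter over $[0,s]$ and differentiating, instead of over $[r,R]$ for all $r<R$).
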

``Uniformly equivalent''  here means that the quotient of $\theta_x (u,r)$
by $\tau(u)\ex^{Er}$ converges to $1$ for $r\to \infty$, uniformly
with respect to $u \in SM$. This result will be consequence of the three
propositions proved in the following subsections.

A Riemannian manifold is harmonic if and only if the density function only
depends on $r$. As an asymptotic analogue, one would expect that
$\lim_{r\to\infty}\frac{\theta(u,r)}{\ex^{Er}}$ does not depend on $u$,
and thus that $\tau(u)$ be constant on $SM$. In proposition~\ref{prop_tau-properties}
we prove it holds under the restrictive assumption that $DR_M$ is bounded.

\subsection{The asymptotic volume-density function $\tau$}

The function $\theta_x$ is related to the mean curvature $h_x$ of spheres centered in
$x$ of radius $r$ by the formula 
\begin{equation} \label{eqcourburemoyenne}
	\frac{\theta_x'(u,r)}{\theta_x(u,r)}=nh_x(\exp_x(ru) )
\end{equation}
where  $\theta_x'$ denotes the derivative of $\theta_x$ with respect to $r$. 

In what follows,   we shall often write for short the point $\exp_x (ru)$ as $(u,r)$
to avoid cumbersome notations; 
moreover, we will regard  $\theta_x (u,r)$ as a function on $SM \times \R$, so we can 
drop the index $x$.

Using lemma \ref{lem_horosphere} we get the following result :
\begin{prop} \label{prop_volum-form}
	Let $M$ be a CH-manifold with curvature $K_M \le -a^2<0$. \\
	If $M$ is asymptotically harmonic, then there exists a bounded, positive function
	$\tau:SM\to\R_+$ such that
	$$
	\forall u\in SM\ \ \ \Bigl|\frac{\theta(u,r)}{\tau(u)\ex^{nhr}}-1\Bigr|
		\le \varepsilon(r)   
	$$
	for an explicit function  $\varepsilon(r)$  only depending on $a$ and $n$ such
	that $\lim_{r\to\infty}\varepsilon(r)=0$.
\end{prop}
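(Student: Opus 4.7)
The plan is to exploit Lemma \ref{lem_horosphere} pointwise and then integrate the resulting differential inequality. For $u\in SM$ and $r>0$, let $y := \exp_x(ru)$ and let $\xi\in\dM$ be the endpoint of the geodesic $\gamma_u$ extended backward through $x$. The horosphere $H_\xi$ through $y$ is tangent to the sphere $S_x(r)$ at $y$, their inner normals coincide, and $S_x(r)$ lies inside the horoball. Taking the trace of Lemma \ref{lem_horosphere} and using $h_\xi\equiv h$ yields the pointwise estimate
$$
0 \;\le\; n\bigl(h_x(y)-h\bigr) \;\le\; n(\cot_a r - a) \;=\; \frac{2na}{e^{2ar}-1}.
$$

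Via (\ref{eqcourburemoyenne}), this bounds the $r$-derivative of $\phi(u,r) := \log\theta(u,r) - nh\,r$: it is nonnegative, and its upper bound is integrable on $[r_0,\infty)$ for any $r_0>0$ (this is the crucial place where the strict upper bound $a>0$ enters). Hence $\phi(u,\cdot)$ is monotone increasing on $(0,\infty)$ and bounded above, so one can define
$$
\tau(u) \;:=\; \lim_{r\to\infty}\theta(u,r)\,e^{-nh r},
$$
which is finite and strictly positive. Integrating the upper bound from $r$ to $+\infty$ gives
$$
0 \;\le\; \log\tau(u)-\phi(u,r) \;\le\; \int_r^{\infty}\!\frac{2na}{e^{2as}-1}\,ds \;=\; -n\log\bigl(1-e^{-2ar}\bigr) \;=:\;\varepsilon(r),
$$
and exponentiating yields $e^{-\varepsilon(r)} \le \theta(u,r)/(\tau(u)e^{nh r}) \le 1$, hence the desired uniform estimate with $\varepsilon(r)$ depending only on $n$ and $a$ and tending to $0$.

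To finish, I would establish the uniform boundedness of $\tau$: the above bound applied at any $r_0 > 0$ gives $\tau(u) \le \theta(u,r_0)\,e^{-nhr_0}(1-e^{-2ar_0})^{-n}$; letting $r_0 \to 0^+$ and using the normal-coordinates asymptotic $\theta(u,r_0) \sim r_0^n$ together with $(1-e^{-2ar_0})^{-n} \sim (2ar_0)^{-n}$ yields the universal bound $\tau(u) \le (2a)^{-n}$. The only point that requires care, and hence the main (mild) obstacle, is verifying the tangency/convexity setup so that Lemma \ref{lem_horosphere} applies with matching inner normals; once the pointwise comparison is in place, the argument is a routine integration of a Riccati-type inequality whose exponential tail is controlled by the curvature upper bound.
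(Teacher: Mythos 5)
Your proposal is correct and follows essentially the same route as the paper: take the trace of Lemma \ref{lem_horosphere} to get $0\le \frac{d}{dr}\log\bigl(\theta(u,r)\ex^{-nhr}\bigr)\le na(\coth(ar)-1)$, deduce monotonicity and boundedness of $\theta(u,r)\ex^{-nhr}$, define $\tau$ as the limit, and integrate the tail $\int_r^\infty na(\coth(at)-1)\,dt=-n\log(1-\ex^{-2ar})$ to get the uniform error; your bound $\tau(u)\le(2a)^{-n}$ via $r_0\to0^+$ is exactly the computation the paper performs at the start of \S4.2. The only difference is cosmetic: you evaluate the integral explicitly, whereas the paper leaves $\varepsilon(r)=\ex^{na\int_r^\infty(\coth(at)-1)dt}-1$ implicit.
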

\begin{proof}
  As $\frac{\theta^\prime(u,r)}{\theta(u,r)} = h_x (u,r)$, taking traces in
  (\ref{eqfundamentalforms1}) yields:
  $$
  0 \le \frac{\theta^\prime(u,r)}{\theta(u,r)} - nh
    \le na(\coth(ar)-1),
  $$
  that is
  $$
  0 \le \frac{d}{dr} \left[ \ln(\theta(u,r)\ex^{-nhr}) \right]
    \le na \left[ \coth(ar)-1\right].
  $$
  The first inequality implies that $\theta(u,r)\ex^{-nhr}$ is nondecreasing
  with respect to \nolinebreak $r$. On the other hand, integrating the second one
  gives
  \begin{equation} \label{eqn_theta-upper-bound}
    \theta(u,r)\ex^{-nhr} \le 
  	\theta(u,s)\ex^{-nhs}\ex^{na\int_s^r(\coth(at)-1)dt}
  \end{equation}
  and, as $\int_s^\infty(\coth(at)-1)dt$ is finite, we deduce that
  $\theta(u,r)\ex^{-nhr}$ is bounded, which implies that
  $\lim_{r\to\infty}\theta(u,r)\ex^{-nhr}$ exists.
  Therefore we can define the function $\tau$ on the unitary tangent bundle as
  \begin{equation}
	\tau(u) = \lim_{r\to\infty}\theta(u,r)\ex^{-nhr}
	\label{eqn_tau}
  \end{equation}
  Moreover, as $\theta(u,r)\ex^{-nhr}$ is nondecreasing we have
  \begin{equation}\label{eqn_tau-positive}
    \forall r>0\ \ \ 0< \theta(u,r)\ex^{-nhr} \le \tau(u)
  \end{equation}
  Again from   (\ref{eqn_theta-upper-bound}), subtracting $\theta(u,s)\ex^{-nhs}$ and
  letting $r \to \infty$ we deduce
  $$
  \tau(u) - \theta(u,s)\ex^{-nhs} \le
    \theta(u,s)\ex^{-nhs}\bigl(\ex^{na\int_s^\infty(\coth(at)-1)dt} -1\bigr)
  $$
  As $\theta(u,s)\ex^{-nhs}$ is nondecreasing  the left-hand side is nonnegative, so
  by (\ref{eqn_tau-positive}),
  $$
  \Bigl|\frac{\theta(u,r)}{\tau(u)\ex^{nhr}}-1\Bigr| \le
    \ex^{na\int_r^\infty(\coth(at)-1)dt} -1
  $$
  The right-hand side is uniformly bounded from above and tends to $0$ when $r$ tends
  to infinity, which concludes the proof.
\end{proof}

\subsection{Properties of the function $\tau$}

First, we remark that the function $\tau$ is bounded : for any
$u \in SM$ we have  $\tau(u) \le \frac{1}{(2a)^n}$. In fact,
from equation (\ref{eqn_theta-upper-bound}) we obtain
$$
\theta(u,r)\ex^{-nhr}
\le \frac{\theta(u,s)\ex^{-nhs}}{\sinh^n(as)}\ex^{-na(r-s)}\sinh^n(ar).
$$
Letting $s$ tend to $0$, as $\frac{\theta(u,s)}{s^n} \to 1$, we deduce
$$
\theta(u,r)\ex^{-nhr} \le \frac{\ex^{-nar}\sinh^n(ar)}{a^n},
$$
and for $r \to \infty$  we get $\tau(u) \le \frac{1}{(2a)^n}$.

Therefore, proposition \ref{prop_volum-form} implies
$$
|\theta(u,r)\ex^{-nhr} - \tau(u)| \le \frac{1}{(2a)^n}\varepsilon(r)
$$
and the function $\tau$ is the uniform limit of $\theta(u,r)\ex^{-nhr}$ ;
as the convergence is uniform,  the function $\tau$ is continuous on $SM$.
Moreover, {\em as soon as $\theta(u,r)\ex^{-nhr}$ has a limit}, this limit can be
expressed in terms of Jacobi tensors. This was used to study the asymptotic behaviour
of the volume on harmonic manifolds (cf. \cite{Knieper2,Connell,He-Kn-Sh}). Using
this approach we get more information on the function $\tau$.
\begin{prop} \label{prop_tau-properties}
	Let $M$ be an asymptotically harmonic CH-manifold with
	curvature $K_M \le -a^2<0$. Then:
	\begin{enumerate}
		\item $\tau: SM \to \R_+$ is invariant by the geodesic flow and flip
			invariant, i.e. :
			\begin{itemize}
				\item $\tau(\dot{\gamma}(t))$ is constant for any geodesic $\gamma$;  
				\item $\tau(v)=\tau(-v)$ for all $v\in SM$.
			\end{itemize}
		\item if $DR_M$ is bounded on $M$, then $\tau$ is constant on $SM$;
		\item $\tau \ge \frac{1}{(2h)^n}$, with equality if and only if the curvature
			is constant.
	\end{enumerate}
\end{prop}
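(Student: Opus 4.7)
The plan is to derive from proposition~\ref{prop_volum-form} an explicit formula for $\tau$ in terms of shape operators of horospheres, and to deduce (i), (ii), (iii) from it. For $u\in S_xM$, set $\xi_\pm=\gamma_u(\pm\infty)$ and let $A^\pm(x)$ denote the shape operators at $x$ of the horospheres $H_{\xi_\pm}(x)$ taken with respect to the inward normals $\pm u$ (they act on $u^\perp\subset T_xM$), so that $\tr A^\pm=-nh$ by asymptotic harmonicity. Along $\gamma_u$ let $J^\pm$ be the stable/unstable Jacobi tensors tangent, respectively, to the $H_{\xi_\pm}$-foliations and normalized by $J^\pm(0)=\id$; then $J^{+\prime}(0)=A^+$, $J^{-\prime}(0)=-A^-$, and $\det J^\pm(r)=\ex^{\mp nhr}$. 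Decomposing the Jacobi tensor $A$ with $A(0)=0$, $A'(0)=\id$ into its stable/unstable parts yields
$$A(r)=\bigl(J^+(r)-J^-(r)\bigr)\bigl(A^+(x)+A^-(x)\bigr)^{-1},$$
so that $\theta(u,r)=\det A(r)$. Letting $r\to+\infty$, since $J^+(r)\to 0$ and $\det J^-(r)=\ex^{nhr}$, one obtains the key identity
\begin{equation}\label{eqn_tau-formula}
\tau(u)=\frac{1}{\det\bigl(-(A^+(x)+A^-(x))\bigr)}.
\end{equation}

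For (i), set $W=A^++A^-$ and $V=A^+-A^-$. The Riccati equation~(\ref{eqn_Riccati}) gives $(A^-)'=-(A^-)^2-R$ along $\gamma_u$; reversing $\gamma_u$ and applying~(\ref{eqn_Riccati}) likewise gives $(A^+)'=(A^+)^2+R$. Hence
$$W'=(A^+)^2-(A^-)^2=\tfrac{1}{2}(VW+WV),$$
so $(\log\det W)'=\tfrac{1}{2}\tr\bigl(W^{-1}(VW+WV)\bigr)=\tr V=\tr A^+-\tr A^-=0$. Thus $\det W$ is constant along every geodesic, which by~(\ref{eqn_tau-formula}) is precisely the flow-invariance of $\tau$. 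Flip-invariance is immediate: reversing $u$ swaps $\xi_+\leftrightarrow\xi_-$ and hence $A^+\leftrightarrow A^-$, leaving $W$ and $\tau$ unchanged.

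For (iii), convexity of Busemann functions on CH-manifolds means $-A^\pm=\hess b_{\xi_\pm}$ are positive semi-definite on $u^\perp$, each with trace $nh$. Consequently $-W$ is positive semi-definite with trace $2nh$, and the AM-GM inequality applied to its eigenvalues gives $\det(-W)\le(2h)^n$, i.e.\ $\tau\ge 1/(2h)^n$, with equality if and only if $-W=2h\cdot\id$. If equality holds throughout $SM$, then $A^++A^-=-2h\,\id$ along every geodesic. Differentiating in $t$ and substituting the two Riccati identities above yields $(A^+)^2=(A^-)^2$, so $-A^+=-A^-$ (both are PSD with a unique PSD square root), hence $-A^\pm=h\,\id$. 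Inserting this in $(A^+)'=(A^+)^2+R$ then gives $R_M(\dot\gamma,\cdot)\dot\gamma=-h^2\,\id$ on $\dot\gamma^\perp$; since $u\in SM$ was arbitrary, every sectional curvature equals $-h^2$ and $M=\M^{n+1}(-h^2)$. The converse is immediate from the model-space computation.

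Part (ii) is the most delicate. In view of (\ref{eqn_tau-formula}) and~(i), it suffices to show that $\det W(x,u)$ does not depend on $u\in S_xM$ when $DR_M$ is bounded. The strategy is to study the dependence of the shape operators $A^\pm(x,u)$ on $u$: these arise as the limits, as $r\to\infty$, of the shape operators at $x$ of the spheres $S_{\exp_x(\mp ru)}(r)$, and the dependence of the latter on the initial direction is controlled via the Jacobi equation by $R_M$ and its first covariant derivative $DR_M$; iterating Riccati, higher variations of $A^\pm$ in $u$ are likewise controlled by $DR_M$ and its covariant derivatives. The main obstacle will be to convert this pointwise variational control of $A^\pm$ into vanishing of the $u$-derivative of $\det W$: here one exploits the flow-invariance of $\det W$ from~(i) together with the exponential contraction along $\gamma_u$ of the stable Jacobi tensor to propagate the $DR_M$-estimates uniformly to infinity, and conclude that $\det W(x,u)$ is stationary in $u$.
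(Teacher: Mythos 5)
Your key identity $\tau(u)=1/\det\bigl(-(A^+(x)+A^-(x))\bigr)$ is exactly the paper's formula (\ref{eqtau}), and your derivation via the decomposition $A(r)=\bigl(J^+(r)-J^-(r)\bigr)\bigl(A^++A^-\bigr)^{-1}$ is sound, up to one small point you gloss over: to pass to the limit you also need $J^-(r)^{-1}J^+(r)\to 0$ (true under $K_M\le-a^2<0$, or you may invoke the existence of the limit from Proposition \ref{prop_volum-form}), not merely $J^+(r)\to 0$ and $\det J^-(r)=\ex^{nhr}$. Parts (i) and (iii) are correct: with your convention $-A^\pm=\hess b_{\xi_\pm}$ the Riccati identities should read $(A^-)'=(A^-)^2+R$ and $(A^+)'=-(A^+)^2-R$ (you have the two interchanged), but this only changes the sign of $W'$, and both $(\log\det W)'=\pm\tr V=0$ and, in the equality case, $R_M(\dot\gamma,\cdot)\dot\gamma=-h^2\,\id$ survive unchanged. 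In fact your treatment of (i) and of the formula is more self-contained than the paper's, which quotes Lemma 2.2 of Heber--Knieper--Shah for flow invariance and Knieper's Corollary 2.5 for the expression of $\tau$.

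Part (ii), however, is not proved: you reduce it to showing that $\det W(x,u)$ is independent of $u\in S_xM$ and then offer only a strategy, ending with ``conclude that $\det W(x,u)$ is stationary in $u$'' --- but bounded $DR_M$ giving \emph{control} on the $u$-variation of $A^\pm$ is in no way a reason for the $u$-derivative of $\det W$ to \emph{vanish}; that last sentence asserts precisely the missing idea. What actually makes (ii) work, and what the paper does, is a different statement: $\tau(u)=\tau(v)$ whenever $\gamma_u$ and $\gamma_v$ are positively asymptotic to the same $\xi\in\dM$. By flow invariance one may take $u,v$ normal to a common horosphere centered at $\xi$, so $d(\gamma_u(t),\gamma_v(t))\le c_1\ex^{-at}$; boundedness of $DR_M$ enters only here, to give $|R_{u,t}(s)-R_{v,t}(s)|\le C\ex^{-a(t+s)}$, and a Riccati comparison for the difference $B(s)=A_{u,t}(s)-A_{v,t}(s)$ shows the mean curvatures of the spheres of radius $s$ centered at $\gamma_u(t)$ and $\gamma_v(t)$ differ by $O(\ex^{-a(t+s)})$; integrating in $s$ and combining with the uniform convergence of Proposition \ref{prop_volum-form} yields $\tau(u)=\tau(v)$. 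Finally, for arbitrary $u,v$, one joins $\gamma_u(+\infty)$ to $\gamma_v(+\infty)$ by a geodesic $\sigma$ (possible because $K_M\le-a^2<0$ makes $M$ a visibility manifold) and concludes by flip invariance, since $\tau(u)=\tau(-\dot\sigma(0))$ and $\tau(v)=\tau(\dot\sigma(0))$. None of this asymptotic-comparison mechanism appears in your sketch, so part (ii) has a genuine gap.
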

\begin{proof}
	A Jacobi tensor  along a geodesic $\gamma$ is a smooth family $J(t)$ of
	endomorphisms of $\dot{\gamma}(t)^\bot$ satisfying the Jacobi equation
	$J''(t)+R(t)J(t)=0$, where $R(t)$ is defined from the Riemann tensor by
	$R(t)u=R(\dot{\gamma}(t),u)\dot{\gamma}(t)$. Then, applying $J$ to any parallel
	vector field $V(t)$ along $\gamma$  gives a  Jacobi vector field $J(t)V(t)$.
	
	Let $v\in S_xM$ and $\gamma(t)=\exp_x(tv)$, and consider the Jacobi tensor $J_v$
	along $\gamma$ defined by $J_v(0)=0$ and $J_v'(0)=\id$. It is well known that
	$J_v'(r)J_v^{-1}(r)$ gives the shape operator $A_{x} (v,r)$  of the sphere
	$S_x(r)$ at $\exp_x(rv)$ (with respect to the inner normal to the sphere), and
	that $\theta(v,r)=\det(J_v(r))$.
	
	For $r\!>\!0$, let  $U_{v,r}$, $S_{v,r}$ be the Jacobi tensors on $\gamma$
	defined by  $U_{v,r}(-r)\!=\!0$, $U_{v,r}(0)=S_{v,r}(0)=\id$  and $S_{v,r}(r)=0$.
	The unstable and stable Jacobi tensors at $v$ are defined by
	$U_v=\lim_{r\to\infty}U_{v,r}$ and $S_v=\lim_{r\to\infty}S_{v,r}$.
	As $U_{v,r}'(0)=J'_{\dot{\gamma}(-r)}(r)J^{-1}_{\dot{\gamma}(-r)}(r)$ is the
	shape operator  of the sphere $S_{\gamma(-r)}(r)$ at $x$, it follows that
	$U_v'(0)$ is shape operator at $x$ of the horosphere centered in
	$\xi_-=\lim_{r\to\infty}\gamma(-r)$. In a similar way, we have that $-S_v'(0)$ is
	the shape operator at $x$ of the horosphere centered in
	$\xi_+=\lim_{r\to\infty}\gamma(r)$.
	Since  $M$ is asymptotically harmonic, we have $\tr(U'_v(0))=nh$; following the
	proof of Corollary 2.5 of \cite{Knieper2} we get
	$$
	\theta(v,t)\ex^{-nht} = \frac{1}{\det(U'_v(0)-S'_{v,t}(0))},
	$$
	which, taking the limit for $t \to \infty$, gives
	\begin{equation} \label{eqtau}
		\tau(v) = \frac{1}{\det(U'_v(0)-S'_v(0))}.
	\end{equation}
	The proposition then follows from this expression of $\tau(v)$.
	
	First, as $U'_v(0)$ and $-S'_v(0)$ are the shape operators of the  horospheres
	centered in $\xi_-$ and $\xi_+$, relative to their respective inner normals, it
	is clear that $\tau$ is flip invariant.
	The invariance by the geodesic flow is just lemma 2.2 in \cite{He-Kn-Sh}.
	 
	To prove the second point, let us first show that $\tau (u)= \tau (v)$ when
	$u, v \in SM$ point towards the same boundary point $\xi\in\dM$, i.e.
	$\lim_{s\to+\infty}\gamma_u(s)=\lim_{s\to+\infty}\gamma_v(s)$. 
	By the invariance of $\tau$ under the geodesic flow, we may as well assume that
	$u$ and $v$ are normal to the same horosphere, so
	$d(\gamma_u(t),\gamma_v(t))\le c_1\ex^{-at}$ for all
	$t>0$. For any $r,t>0$  we have
	\begin{multline*}
		|\tau(u)-\tau(v)| \le
			|\tau(u)-\theta(\dot{\gamma}_u (t),r)\ex^{-nhr}|
			+ |\theta(\dot{\gamma}_u (t),r)
			- \theta(\dot{\gamma}_v (t),r)|\ex^{-nhr} \\
		+ |\tau(v)-\theta(\dot{\gamma}_v (t),r)\ex^{-nhr}|,
	\end{multline*}
	and using the invariance of $\tau$ by the geodesic flow and
	Proposition \ref{prop_volum-form} we get
	\begin{equation}\label{eqn_tau-tau}
		|\tau(u)-\tau(v)| \le (\tau(u)+\tau(v))\varepsilon(r)
			+ |\theta(\dot{\gamma}_u (t),r)
			- \theta(\dot{\gamma}_v (t),r)|\ex^{-nhr}.
	\end{equation}
	For $s\in]0,r]$, let $h_{u,t}(s)$ (resp. $h_{v,t}(s)$) be the mean curvature, at
	the point $\gamma_u (t+s)$ (resp.  at
	$\gamma_v (t+s)$), of the sphere of radius $s$ centered in
	$\gamma_u(t)$ (resp.   $\gamma_v(t)$).
	Following the Lemma 2.3 in \cite{He-Kn-Sh}, we will use comparison theory for
	Riccati equation to estimate	$|h_{u,t}(s)-h_{v,t}(s)|$.
	We choose  orthonormal parallel basis $e_{u,i}(s)$ of
	$\dot{\gamma}_u(t+s)^\bot$ and  $e_{v,i}(s)$ of
	$\dot{\gamma}_v(t+s)^\bot$) such that, for any $i$,
	$d(e_{u,i}(s),e_{v,i}(s))\le c_2\ex^{-a(t+s)}$ in $SM$,
	for some constant $c_2$.
	Let $A_{u,t}(s)$ and $A_{v,t}(s)$ be the matrices of the second fundamental forms
	of the spheres of radius $s$ centered in
	$\gamma_u(t), \gamma_v(t)$ in these basis.
	They  satisfy the Riccati equations $A'_{u,t}(s)+A^2_{u,t}(s)+R_{u,t}(s)=0$ and
	$A'_{v,t}(s)+A^2_{v,t}(s)+R_{v,t}(s)=0$, where $R_{u,t}(s)$  is the matrix of the
	endomorphism
	$R(\dot{\gamma}_u(t+s),.)\dot{\gamma}_u(t+s)$, 
	and analogously for  $R_{v,t}(s)$. Because of the assumption on $DR_M$, we have
	that the tensor  $r(s)=R_{u,t}(s)-R_{v,t}(s)$ satisfies
	\begin{equation}\label{eqn_r(t)}
		|r(s)|\le C_3\ex^{-a(t+s)}.
	\end{equation}
	
	Consider now   $B(s)=A_{u,t}(s)-A_{v,t}(s)$ and
	$Q(s)=\frac{1}{2}(A_{u,t}(s)+A_{v,t}(s))$. From the Riccati equations we have
	that $B$ is solution of
	$$
	B'(s) + B(s)Q(s) + Q(s)B(s) + r(s)=0.
	$$
	A direct computation shows that for any $0<\varepsilon<s$ we have the formula 
	\begin{equation}\label{eqn_B(s)}
		B(s)=  \,^tC(s)
			\left[\,^tC(\varepsilon)^{-1} B(\varepsilon) C(\varepsilon)^{-1}
			- \int^s_{\varepsilon} \!\!
			\,^t C(\zeta)^{-1} r(\zeta) C(\zeta)^{-1} d\zeta \right] C(s)
	\end{equation}
	where $C(s)$ is a solution of $C'(s)=-C(s)Q(s)$. In particular, because of the
	curvature upper bound we have $Q(s)\ge a\id$ hence, for any $0<\varepsilon<s$,
	$|C(\varepsilon)^{-1}C(s)|\le \ex^{-a(s-\varepsilon)}$. 
	Plugging  this estimate and (\ref{eqn_r(t)}) in the formula (\ref{eqn_B(s)}) we
	get
	$$
	|B(s)| \le |B(\varepsilon)|\ex^{-2a(s-\varepsilon)}
	 + c_4\ex^{-a(t+s)}
	$$
	Since both $A_{u,t}(s)$ and $A_{v,t}(s)$ behave, for $s \to 0$, as
	$\frac{1}{s}\id+o(1)$ we have $\lim_{\varepsilon\to 0}B(\varepsilon)=0$;
	therefore we deduce that $|B(s)|\le c_4 \ex^{-a(t+s)}$ and, taking the
	trace,
	$$
	|h_{u,t}(s)-h_{v,t}(s)| \le c_5\ex^{-a(t+s)}
	$$
	for some constant $c_5$. By the expression (\ref{eqcourburemoyenne}) for $h_x$,
	integrating on $[0,r]$ yields
	$$
	-c_6(1-\ex^{-ar})\ex^{-at} \le
		\ln\frac{\theta(\dot{\gamma_u}(t),r)}%
		{\theta(\dot{\gamma_v}(t),r)}
		\le c_6(1-\ex^{-ar})\ex^{-at}
	$$
	With these inequalities we can bound the last term of (\ref{eqn_tau-tau}):
	\begin{multline}\label{eqn_2nd-term}
		\theta(\dot{\gamma_v}(t),r)\ex^{-nhr}
			\bigl[\exp(-c_6(1-\ex^{-ar})\ex^{-at})-1\bigr] \\
		\le |\theta(\dot{\gamma_u}(t),r)-
			\theta(\dot{\gamma_v}(t),r)|\ex^{-nhr} \\
		\le \theta(\dot{\gamma_v}(t),r)
			\ex^{-nhr}\bigl[\exp(c_6(1-\ex^{-ar})\ex^{-at})-1\bigr]
	\end{multline}
	Choosing $r$ large enough, the term $\varepsilon(r)$ in (\ref{eqn_tau-tau}) can
	be made arbitrary small; for this value of $r$, $\theta(\dot{\gamma}_v(t),r)\ex^{-nhr}$
	stays close to $\tau (v)$ for all $t$ by proposition~\ref{prop_volum-form},
	and the above estimate (\ref{eqn_2nd-term}) implies that we can choose
	$t$ large enough to make also the last term of (\ref{eqn_tau-tau})  arbitrary
	small. Therefore $\tau(u)=\tau(v)$.

	Consider now any vector $u,v\in SM$, and let $\sigma$ be a geodesic such that
	$\lim_{s\to-\infty}\sigma(s) = \lim_{s\to+\infty}\gamma_u(s)$
	and $\lim_{s\to+\infty}\sigma(s) = \lim_{s\to+\infty}\gamma_v(s)$.
	From the  above computations we must have
	$\tau(v)=\tau(\dot{\sigma}(0))$ and
	$\tau(u)=\tau(-\dot{\sigma}(0))$, so by flip invariance we get
	$\tau(u)=\tau(v)$. Therefore $\tau$ is constant.

	The third point is similar to Corollary 2.6 in \cite{Knieper2}. As
	$U'_v(0)-S'_v(0)$ is a positive symmetric matrix, the arithmetic-geometric
	inequality gives
	$$\det(U'_v(0)-S'_v(0))^{\frac{1}{n}}\le\frac{1}{n}\tr(U'_v(0)-S'_v(0))=2h$$
	and the inequality follows.
	The case of equality follows, as in the proof of Corollary 2.6 in
	\cite{Knieper2}, from the fact that  $s\mapsto U'_{\dot{\gamma}(s)}(0)$ and
	$s\mapsto S'_{\dot{\gamma}(s)}(0)$ satisfy the Riccati equation, and because 
	$U'_{\dot{\gamma}(s)}(0) - S'_{\dot{\gamma}(s)}(0) = \frac{2h}{n-1}Id$.
\end{proof}
\begin{rem}
	The second point is very close to Lemma 2.3 and Corollary 2.1 of \cite{He-Kn-Sh},
	but, in our proof, we don't need any lower bound on the curvature and rather use
	Proposition \ref{prop_volum-form}.
\end{rem}

\subsection{Characterization of asymptotic harmonicity}

Proposition \ref{prop_volum-form} says that the volume form of $M$ has purely
exponential growth, with isotropic exponential rate (and is asymptotically perfectly
isotropic when $DR_M$ is bounded). In fact this is a characterization of asymptotic
harmonicity:

\begin{prop} \label{prop_characterization}
	Let $M$ be a CH-manifold with  $K_M \le -a^2<0$ and entropy $E$.  
	If there exists a positive function $\tau:SM\to\R$ such that
	$\theta(u,r)$ is uniformly equivalent to $\tau(u)\ex^{Er}$ for
	$r \to \infty$, then $M$ is asymptotically harmonic.
\end{prop}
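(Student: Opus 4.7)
The plan is to extract the mean curvature of horospheres directly from the asymptotic form of $\theta$ by combining the density identity $\theta'(u,r)/\theta(u,r)=nh_x(\exp_x(ru))$ from (\ref{eqcourburemoyenne}) with the sphere-horosphere comparison of lemma~\ref{lem_horosphere}.

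First I would fix an arbitrary geodesic $\gamma:\R\to M$ and, for $s>0$, take the base point $x_s=\gamma(-s)$ with direction $u_s=\gamma'(-s)\in S_{x_s}M$, so that the sphere $S_{x_s}(s+r)$ passes through $\gamma(r)$ for every $r\ge 0$. The hypothesis $\theta(u,r)/(\tau(u)\ex^{Er})\to 1$ uniformly on $SM$ implies that the ratio $\theta(u_s,s+r)/\theta(u_s,s)$, in which the factor $\tau(u_s)$ cancels, tends to $\ex^{Er}$ uniformly for $r$ in any bounded interval $[0,T]$ as $s\to\infty$. Taking logarithms and using (\ref{eqcourburemoyenne}), this converts into
$$
n\int_0^r h_{x_s}(\gamma(\sigma))\,d\sigma \;\longrightarrow\; Er
\qquad(s\to\infty)
$$
uniformly in $r\in[0,T]$, where $h_{x_s}(\gamma(\sigma))$ is the mean curvature at $\gamma(\sigma)$ of $S_{x_s}(s+\sigma)$.

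Next I would apply lemma~\ref{lem_horosphere} at $\gamma(\sigma)$ to the sphere $S_{x_s}(s+\sigma)$ and the tangent horosphere $H_{\xi_-}(\gamma(\sigma))$, where $\xi_-=\gamma(-\infty)$: taking traces in the inequalities there yields
$$
0 \le h_{x_s}(\gamma(\sigma))-h_{\xi_-}(\gamma(\sigma)) \le a\bigl(\coth(a(s+\sigma))-1\bigr),
$$
which tends to $0$ uniformly in $\sigma\in[0,T]$ as $s\to\infty$. Combining the two uniform limits produces $n\int_0^r h_{\xi_-}(\gamma(\sigma))\,d\sigma = Er$ for every $r\ge 0$, and differentiation gives $h_{\xi_-}(\gamma(r))\equiv E/n$. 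Since every pair $(\xi,y)\in\dM\times M$ arises as $(\gamma(-\infty),\gamma(0))$ for some geodesic, this forces every horosphere to have constant mean curvature $E/n$, which is exactly asymptotic harmonicity (and recovers $E=nh$ in agreement with theorem~\ref{thm_entropy}).

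The delicate point is securing both convergences \emph{uniformly} in $r$ (respectively $\sigma$) on compact intervals, so that passing to the limit inside the integral and then differentiating in $r$ is legitimate: for the first limit the uniformity is exactly the content of the hypothesis on $\theta$, while for the second limit it relies crucially on the strict curvature upper bound $K_M\le-a^2<0$, which through lemma~\ref{lem_horosphere} supplies the explicit vanishing error term $a(\coth(a(s+\sigma))-1)$.
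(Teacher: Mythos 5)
Your proof is correct and follows essentially the same route as the paper: you compare the mean curvature of the spheres $S_{\gamma(-s)}(s+\sigma)$ with that of the horospheres centred at $\gamma(-\infty)$ via the trace of lemma~\ref{lem_horosphere}, integrate along the geodesic, and use the uniform equivalence $\theta(u,r)\sim\tau(u)\ex^{Er}$ to identify the limit, concluding $h\equiv E/n$. The only difference from the paper's write-up is the order of the manipulations (you pass to the limit in the integrand rather than integrating the comparison inequality first), which is immaterial.
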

\begin{rem}
	Notice that, together with Benoist-Foulon-Labourie and Besson-Courtois-Gallot
	characterization of cocompact asymptotically harmonic spaces,
	Proposition \ref{prop_characterization} shows that {\em if a CH-manifold with
	compact quotients has volume form which is (uniformly) equivalent to a
	function $\tau(u)\ex^{ER}$, then it is a ROSS}.
\end{rem}

\begin{proof}
	Let $\gamma(t)$ be a geodesic of $M$ with
	$\lim_{t\to-\infty}\gamma(t)=\xi\in\dM$,
	and let $h(t)$ be the mean curvature at $\gamma(t)$ of the horosphere $H_\xi(t)$
	centered in $\xi$ and passing through $\gamma(t)$. We shall prove that
	the function $h(t)$ is constant.
	
	Let $r<R$ be two real numbers, and choose $s>-r$. For any $t\in[r,R]$, we use
	Lemma \ref{lem_horosphere} to compare the second fundamental forms of $H_\xi(t)$
	and $S_{\gamma(-s)}(t+s)$ at $\gamma(t)$. Taking the trace in
	(\ref{eqfundamentalforms1}), we have
	$$
	0 \le \frac{\theta'(\dot{\gamma}(-s),t+s)}%
		{\theta(\dot{\gamma}(-s),t+s)} - nh(t) \le na\bigl(\coth(a(t+s))-1\bigr)
	$$
	and integrating on $[r,R]$ with respect to $t$ we get
	\begin{equation} \label{eqn_along-geodesic}
	0 \le \ln\frac{\theta(\dot{\gamma}(-s),R+s)}%
		{\theta(\dot{\gamma}(-s),r+s)} - n\int_r^Rh(t)dt
		\le \ln\Bigl(\frac{\sinh^n(a(R+s))}{\sinh^n(a(r+s))}\ex^{-na(R-r)}\Bigr)
	\end{equation}
	The right-hand side tends to $0$ when $s$ tends to infinity. Moreover, by
	hypothesis we have
	$|\frac{\theta(\dot{\gamma}(-s),R+s)}{\tau(\dot{\gamma}(-s))\ex^{E(R+s)}}-1|
	\le\varepsilon(R+s)$ with $\lim_{s\to\infty}\varepsilon(R+s)=0$, and we get
	$$
	\lim_{s\to\infty}\frac{\theta(\dot{\gamma}(-s),R+s)}%
		{\tau(\dot{\gamma}(-s))\ex^{Es}} = \ex^{ER}.
	$$
	Analogously, we find $\lim_{s\to\infty}\frac{\theta(\dot{\gamma}(-s),r+s)} 
	{\tau(\dot{\gamma}(-s))\ex^{Es}} = \ex^{Er}$, so letting $s$ tend to infinity
	in (\ref{eqn_along-geodesic}) we obtain
	$$
	E(R-r) -n\int_r^Rh(t)dt = 0.
	$$ 	
	Therefore  $\int_r^R(E-nh(t))dt=0$ for all $r<R$, from which we deduce that 
	$h(t)=\frac{E}{n}$  for all $t\in\R$, and $M$ is asymptotically harmonic.
\end{proof}

%
%
%
%
\section{Margulis function and measures at infinity}

In this last section, we assume that $M$ is a asymptotically harmonic CH-manifold
with  pinched curvature $-b^2 \le K_M \le -a^2 < 0$, and $h$ is always  the mean
curvature of the horospheres.

\subsection{Visual and harmonic measures} 
There are two families of measures naturally defined on the ideal boundary
of Cartan-Hadamard manifolds: the visual and harmonic measures.

To define the visual measures, consider the homeomorphism given by the ``projection
on $\dM$ from $x$'':
$$
\phi_x : \left\{\begin{array}{rcl}
					S_xM & \to & \dM \\
					u & \mapsto & \phi_x(u)=\lim_{t\to\infty}\exp_x(tu)
				\end{array}\right.
$$
The measure $\lambda_x$ is the push-forward on $\dM$ of the (normalized)
Riemannian measure of $S_xM$.

On the other hand, the family of  harmonic measures comes from the uniqueness of the
solution to the Dirichlet problem at infinity (cf. \cite{Anderson-Schoen}):
given a continuous function $f$ on $\dM$, there exists a unique bounded
harmonic function $F$ on $M$ such that $\lim_{x\to\xi}F(x)=\xi$. Then, it is a
consequence of Riesz representation theorem that there exists a unique family of
measures
$\mu_x$, $x\in M$, such that $F(x)=\int_{\dM}f(\xi)d\mu_x(\xi)$.

\begin{prop}\label{prop_densities}
	Let $M$ be an asymptotically harmonic CH-manifold with
	pinched curvature $-b^2\le K_M \le -a^2<0$.	
	For any $x,y\in M$ we have
	$$
	\frac{d\lambda_x}{d\lambda_y}(\xi) =
		\frac{\tau(\phi_y^{-1}(\xi))}{\tau(\phi_x^{-1}(\xi))}
		\ex^{-nh(b_\xi(x)-b_\xi(y))}\ \mbox{ and }\ 
		\frac{d\mu_x}{d\mu_y}(\xi) = \ex^{-nh(b_\xi(x)-b_\xi(y))}.
	$$
\end{prop}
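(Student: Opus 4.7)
My approach treats the two identities separately. For the visual-measure identity I reduce the problem to computing the Jacobian of the angular homeomorphism $\Psi := \phi_y^{-1} \circ \phi_x : S_xM \to S_yM$. Since $\lambda_x = \tfrac{1}{|S^n|}(\phi_x)_* du_x$ (and analogously at $y$) with $|S_xM|=|S_yM|=|S^n|$, pulling back via $\phi_y$ gives directly
\[
\frac{d\lambda_x}{d\lambda_y}(\xi) \;=\; |\det D\Psi^{-1}(v)| \;=\; \frac{1}{|\det D\Psi(u)|}, \qquad u = \phi_x^{-1}(\xi),\ v = \phi_y^{-1}(\xi).
\]

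To compute $|\det D\Psi(u)|$ I approximate $\Psi$ by the maps $\Psi_R(u) := \exp_y^{-1}(p)/d(y,p)$, with $p := \exp_x(Ru)$, which converge to $\Psi$ as $R \to \infty$. Writing $\Psi_R = P_y \circ T^x_R$, where $T^x_R$ sends $u$ to $p \in S_x(R)$ (Jacobian $\theta_x(u, R)$) and $P_y : M\setminus\{y\} \to S_yM$ is the radial projection, the restriction of $DP_y$ to $T_pS_x(R)$ factors (by Gauss' lemma) through $T_pS_y(R')$: first the orthogonal projection between the two tangent hyperplanes (Jacobian $\cos\alpha_R$, where $\alpha_R$ is the angle at $p$ between the radial fields of $x$ and $y$), then the inverse Jacobi tensor along the geodesic from $y$ (Jacobian $1/\theta_y(\Psi_R(u), R')$, where $R' := d(y,p)$). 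Hence
\[
|\det D\Psi_R(u)| \;=\; \frac{\theta_x(u, R)\,\cos\alpha_R}{\theta_y(\Psi_R(u), R')}.
\]
Letting $R \to \infty$, I would invoke the \emph{uniform} asymptotic $\theta \sim \tau\, e^{nh\cdot}$ from Proposition~\ref{prop_volum-form}, the continuity of $\tau$, the exponential convergence of asymptotic geodesics under $K_M \le -a^2 < 0$ (giving $\alpha_R \to 0$), and the Busemann cocycle identity $R - R' = d(x,p) - d(y,p) \to b_\xi(x) - b_\xi(y)$ as $p \to \xi$. This yields
\[
|\det D\Psi(u)| \;=\; \frac{\tau(u)}{\tau(v)}\,e^{nh(b_\xi(x)-b_\xi(y))},
\]
whose reciprocal is the claimed density.

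For the harmonic-measure identity I identify $d\mu_x/d\mu_y$ with the normalized Martin-Poisson kernel. The function $u_\xi(x) := e^{-nh\, b_\xi(x)}$ is positive and harmonic on $M$ (observed just after Proposition~\ref{prop_hessian}). Under the pinching $-b^2 \le K_M \le -a^2$, Anderson--Schoen's theorem both solves the Dirichlet problem at infinity and identifies the Martin boundary with $\dM$, each minimal kernel $K(\cdot,\xi)$ being characterized, up to a positive scalar, as a positive harmonic function that blows up at $\xi$ and vanishes at every other boundary point. The function $u_\xi$ satisfies this: $b_\xi \to -\infty$ along any sequence converging to $\xi$, while $b_\xi \to +\infty$ along any sequence converging to a different boundary point (a standard consequence of the Gromov hyperbolicity of $M$). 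Normalizing so that $K(y,\xi) = 1$ yields $K(x,\xi) = u_\xi(x)/u_\xi(y) = e^{-nh(b_\xi(x) - b_\xi(y))}$, which is precisely $d\mu_x/d\mu_y(\xi)$.

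The main obstacle is the $R \to \infty$ passage in the visual-measure Jacobian: the three ingredients $\theta_x(u,R)e^{-nhR}$, $\theta_y(\Psi_R(u), R') e^{-nhR'}$, and $\cos\alpha_R$ must converge jointly in a way that commutes with the determinant of the limit map, which crucially relies on the \emph{uniformity} in $u \in SM$ of Proposition~\ref{prop_volum-form}'s estimate. The harmonic-measure half, while resting on Anderson--Schoen's deep potential theory, is then essentially formal.
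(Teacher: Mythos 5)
Your overall strategy coincides with the paper's for both halves. For the harmonic measures you argue exactly as the paper does (Anderson--Schoen, harmonicity of $\ex^{-nhb_\xi}$, uniqueness of the Poisson/Martin kernel), and you even supply the boundary-behaviour argument ($b_\xi\to+\infty$ at every $\eta\neq\xi$, $b_\xi\to-\infty$ at $\xi$) that the paper leaves implicit; this half is fine. For the visual measures your ingredients are also the paper's: comparison of the two radial projections on a sphere of large radius, a Jacobian equal to a ratio of densities $\theta$ times an angle correction, the \emph{uniform} estimate of Proposition \ref{prop_volum-form}, the convergence $\alpha_R\to 0$ and $d(x,p)-d(y,p)\to b_\xi(x)-b_\xi(y)$, and a dominated-convergence passage (the paper's finite-radius map goes from $S_yM$ to $S_x(t)$ rather than from $S_xM$ to $S_yM$, so its cosine sits in the denominator; this difference is immaterial).

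The genuine gap is the final step of the visual-measure half, where you read the result off as $|\det D\Psi(u)|=\lim_{R\to\infty}|\det D\Psi_R(u)|$ for the limit map $\Psi=\phi_y^{-1}\circ\phi_x$. Two things are unjustified there. First, the differentiability of $\Psi$ itself: $\Psi(u)$ is essentially $-\nabla b_{\phi_x(u)}(y)$, and the dependence of Busemann functions (equivalently, of the transition maps between visual spheres) on the boundary point is in general only H\"older in pinched negative curvature; nothing in the paper, nor in your argument, gives $C^1$ regularity. Second, even granting differentiability, pointwise (or uniform $C^0$) convergence $\Psi_R\to\Psi$ together with convergence of the numbers $\det D\Psi_R(u)$ does not identify $\det D\Psi(u)$; one would need $C^1$ convergence. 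The repair is to never differentiate $\Psi$ at all: for a measurable $U\subset\dM$ write $\lambda_x(U)$ as an integral over its projection on $S_x(t)$ with density $1/\theta$, change variables through the finite-radius comparison map (whose Jacobian is $\theta_y(v,r_y)/\langle\nabla r_x,\nabla r_y\rangle$), and pass to the limit \emph{inside the integral}, using the uniformity in Proposition \ref{prop_volum-form}, the uniform convergence $r_y(\cdot)-t\to b_\xi(y)-b_\xi(x)$, the continuity of $\tau$ together with its upper bound $\tau\le(2a)^{-n}$ and lower bound $\tau\ge(2h)^{-n}$ (Proposition \ref{prop_tau-properties}), and dominated convergence. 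This yields the Radon--Nikodym derivative directly, with no regularity claim on the boundary map, and is exactly how the paper concludes; recast at the level of integrals, your outline becomes a complete proof.
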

\begin{proof}
	Consider the distance functions $r_x$ and $r_y$ to  $x, y \in M$ respectively,
	and the sphere $S_x(t)$ centered in $x$ of radius $t$.
	For $t$ great enough, each geodesic ray from $y$ intersect $S_x(t)$ at a unique
	point; for $v\in S_yM$, let $F_t(v)$ be the intersection point of the geodesic
	$s\mapsto\exp_y(sv)$ and $S_x(t)$.
	
	The map $F_t:S_yM\to S_x(t)$ so defined is a diffeomorphism whose Jacobian is 
	\begin{equation} \label{eqjac}
		\jac_vF_t = \frac{\theta (v,r_y(F_t(v)))}
			{\langle\nabla r_y(F_t(v)),\nabla r_x(F_t(v))\rangle}
	\end{equation}
	
	Now, let $U\subset\dM$ be a measurable set with negligible boundary, and let
	$U_t = \{\exp_x (tu)\ |\ u\in\phi_x^{-1}(U)\} $ be the projection of $U$ on
	$S_x(t)$ from $x$.

	By definition of $\lambda_x$ we have
	$$
	\lambda_x(U) = \int_{\phi_x^{-1} (U)} d\sigma_x
		=\frac{1}{\vol(S^n)}\int_{U_t}\frac{1}{\theta(P_t^{-1}(z),t)}dv_{S_x(t)}(z)
	$$
	where $P_t(u)=\exp_x(tu)$ for $u\in S_xM$, $d\sigma_x$ is the normalized
	measure of $S_xM$, and $dv_{S_x(t)}$ the volume forms of $S_x(t)$.
	
	By (\ref{eqjac}), we get 
	\begin{equation} \label{eqn_lambdax}
		\lambda_x(U) = \int_{F_t^{-1}(U_t)}
			\frac{\theta (v,r_y(F_t(v)))}{\theta(P_t^{-1}\circ F_t(v),t)}
			\langle\nabla r_y(F_t(v)),\nabla r_x(F_t(v))\rangle^{-1}d\sigma_y(v)
	\end{equation}
	where $d\sigma_y$ is the normalized measure on $S_yM$.
	
	Now we observe that, letting $t$ tend  to infinity, we have
	\begin{itemize}
		\item $\lim_{t\to\infty} P_t^{-1}\circ F_t(v) = \phi_x^{-1}\circ\phi_y(v)$;
		\item $\lim_{t\to\infty} \chi_{F_t^{-1}(U_t)} = \chi_{\phi_y^{-1}(U)}$ almost
			everywhere;
		\item $\lim_{t\to\infty} \langle\nabla r_y(F_t(v)),
			\nabla r_x(F_t(v))\rangle=1$
	\end{itemize}
	Moreover, from Theorem \ref{prop_volum-form} we know that
	\begin{multline*}
		\frac{\tau(v)-\varepsilon(r_y(F_t(v)))}
			{\tau(P_t^{-1}\circ F_t(v)))+\varepsilon(t)}\ex^{nh(r_y(F_t(v))-t)}
			\le \frac{\theta_y(v,r_y(F_t(v)))}{\theta(P_t^{-1}\circ F_t(v),t)} \\
		\le \frac{\tau(v)+\varepsilon(r_y(F_t(v)))}
			{\tau(P_t^{-1}\circ F_t(v))-\varepsilon(t)}\ex^{nh(r_y(F_t(v))-t)}
	\end{multline*}
	By definition of  Busemann function  we have that $r_y(F_t(v))-t $ converges,
	uniformly on $S_yM$, to $b_{\phi_y(v)}(y)-b_{\phi_y(v)}(x)$; so, as $\tau$ is
	continuous and bounded, by dominating convergence (\ref{eqn_lambdax}) yields
	\begin{eqnarray*}
		\lambda_x(U) & = & \int_{\phi_y^{-1}(U)}
				\frac{\tau(v)}{\tau(\phi_x^{-1}\circ\phi_y(v))}
				\ex^{-nh(b_{\phi_y(v)}(x)-b_{\phi_y(v)}(y))}dv_{S_yM} (v) \\
		 & = & \int_U
				\frac{\tau(\phi_y^{-1}(\xi))}{\tau(\phi_x^{-1}(\xi))}
				\ex^{-nh(b_\xi(x)-b_\xi(y))}d\lambda_y(\xi)
	\end{eqnarray*}
	which proves the first equality of the proposition.
	
	The second equality follows from \cite{Anderson-Schoen}: the relative
	densities of harmonic measures are given by the Poisson kernel, and, as
	$\Delta b_\xi=-nh$, by unicity of the Poisson kernel we have
	$\frac{d\mu_x}{d\mu_y}(\xi) = \ex^{-nh(b_\xi(x)-b_\xi(y))}$.
\end{proof}

As a consequence of Theorem \ref{prop_tau-properties}, we have that, {\em when the
derivative of the Riemann tensor is bounded, the visual and harmonic measures
class have the same relative densities}.

\subsection{The Margulis function}

For cocompact CH-manifolds Margulis introduced the function 
$$
m(x)=\lim_{r\to\infty}\vol(S_x(r))\ex^{-Er}.
$$
where $E$ is the volume entropy of $M$. The main conjecture concerning this function
is  that it is constant if and only if $M$ is a symmetric space,
cf.\cite{Yue,Knieper} for some related results. 
Theorem \ref{prop_volum-form} allows us to define the Margulis function for
asymptotically harmonic manifolds (even noncocompact) :
\begin{prop} \label{propmargulis}
	Let $M$ be an asymptotically harmonic CH-manifold with  $-b^2\le K \le -a^2<0$. 
	There exists a function $m: M \to \R_+$ such that
	$$
	\lim_{r\to\infty}\vol(S_x(r))\ex^{-nhr} = m(x)
	\ \mbox{ and }\ 
	\lim_{r\to\infty}\vol(B_x(r))\ex^{-nhr} = \frac{m(x)}{nh}
	$$
	for any $x\in M$. Moreover, the function $m$ is harmonic.
\end{prop}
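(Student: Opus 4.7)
The plan is to define $m(x)$ from Proposition~\ref{prop_volum-form}, deduce the ball formula by L'Hôpital, and then rewrite $m$ as an integral of harmonic functions against an $x$-independent measure at infinity.

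\textbf{Step 1 (Sphere limit).} In polar coordinates centered at $x$ we have
$\vol(S_x(r))=\int_{S_xM}\theta(u,r)\,dv_{S_xM}(u)$.
By Proposition~\ref{prop_volum-form}, $\theta(u,r)\ex^{-nhr}\to\tau(u)$ uniformly in $u\in SM$, and $\tau$ is bounded by $1/(2a)^n$; hence
$$
\lim_{r\to\infty}\vol(S_x(r))\ex^{-nhr}=\int_{S_xM}\tau(u)\,dv_{S_xM}(u)=:m(x),
$$
which is finite and strictly positive since $\tau>0$. This function $m:M\to\R_+$ is the candidate Margulis function.

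\textbf{Step 2 (Ball limit).} Write $\vol(B_x(r))=\int_0^r\vol(S_x(s))\,ds$. Since $\vol(S_x(s))\ex^{-nhs}\to m(x)$, L'Hôpital's rule gives
$$
\lim_{r\to\infty}\vol(B_x(r))\ex^{-nhr}
=\lim_{r\to\infty}\frac{\vol(S_x(r))}{nh\,\ex^{nhr}}=\frac{m(x)}{nh}.
$$

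\textbf{Step 3 (Integral representation and harmonicity).} Fix a base point $x_0$ and let $b_\xi$ denote the Busemann function vanishing at $x_0$. Proposition~\ref{prop_densities} (which uses only the pinching assumption) can be rewritten as
$$
\tau(\phi_x^{-1}(\xi))\,\ex^{nh\,b_\xi(x)}\,d\lambda_x(\xi)
=\tau(\phi_y^{-1}(\xi))\,\ex^{nh\,b_\xi(y)}\,d\lambda_y(\xi),
$$
so the measure $d\nu(\xi):=\tau(\phi_x^{-1}(\xi))\ex^{nh\,b_\xi(x)}d\lambda_x(\xi)$ does not depend on $x$. Recalling that $\lambda_x$ is the pushforward of the normalized measure on $S_xM$ by $\phi_x$, we obtain
$$
m(x)=\int_{S_xM}\tau(u)\,dv_{S_xM}(u)
=\vol(S^n)\int_{\dM}\tau(\phi_x^{-1}(\xi))\,d\lambda_x(\xi)
=\vol(S^n)\int_{\dM}\ex^{-nh\,b_\xi(x)}\,d\nu(\xi).
$$
Since the paper has already observed that for each fixed $\xi$ the function $x\mapsto\ex^{-nh\,b_\xi(x)}$ is harmonic on $M$, and since $\nu$ is a finite Borel measure (its total mass equals $m(x_0)/\vol(S^n)$), differentiation under the integral sign yields $\Delta m\equiv 0$.

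\textbf{Main obstacle.} The only nonroutine point is justifying the interchange of $\Delta$ and $\int_{\dM}$. This requires dominating the first and second $x$-derivatives of $\ex^{-nh\,b_\xi(x)}$, uniformly in $\xi$, on a neighborhood of each $x\in M$. This follows from the standard fact (recalled in Section~1) that Busemann functions on a CH-manifold are $C^2$ and from the curvature pinching $-b^2\le K_M\le -a^2$, which bounds $|\nabla b_\xi|=1$ and $|\hess\,b_\xi|\le b$ independently of $\xi$; the integrand and its derivatives up to order two are therefore uniformly controlled on compact subsets of $M$, and dominated convergence legitimates the differentiation.
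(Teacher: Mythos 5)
Your proof is correct and follows essentially the same route as the paper: $m(x)=\int_{S_xM}\tau\,dv_{S_xM}$ via Proposition~\ref{prop_volum-form}, and harmonicity via the change of measure from Proposition~\ref{prop_densities} combined with the harmonicity of $x\mapsto\ex^{-nhb_\xi(x)}$. The only divergences are minor and to your credit: you obtain the ball limit by L'H\^opital instead of the paper's argument (which uses the isoperimetric inequality of Proposition~\ref{isop} to show that $\Vol(B_x(r))\ex^{-nhr}$ is increasing and bounded, then extracts a sequence of radii where its derivative vanishes), and you explicitly justify the differentiation under the integral sign, which the paper leaves implicit.
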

\begin{proof}
	Let $V_x(r)=\Vol(B_x(r))$ and $v_x(r)=\vol(S_x(r))$, so  $V_x^\prime(r)=v_x(r)$.
	Since $v_x(r)=\int_{S_xM}\theta(u,r)du$, integrating (\ref{eqn_tau}) on $S_xM$,
	by monotone convergence we get the first equality with
	$m(x)=\int_{S_xM}\tau(u)du$.
	
	Then, by Proposition \ref{isop},  we have $ V_x^\prime(r)-nhV_x(r) \ge 0$, so 
	$V_x(r)\ex^{-nhr}$ is increasing. As $V_x(r)=\int_0^r\int_{S_xM}\theta(u,s)duds$,
	Theorem \ref{prop_volum-form} implies, for any $r\ge 1$,
	$$
	V_x(r) \le V_x(1) + \frac{m(x)}{nh}(\ex^{nhr}-\ex^{nh}) +
		\vol(S^n)\int_1^r\varepsilon(s)\ex^{nhs}ds
	$$
	from which we deduce that $V_x(r)\ex^{-nhr}$ is bounded; hence, it converges to
	some limit $l(x)$. As  $V_x(r)\ex^{-nhr}$ is increasing and converging, there
	exists a sequence $r_k \to \infty$ such that
	\begin{eqnarray*}
		0 & = & \lim_{k\to\infty}\frac{d}{dr}_{\left| _{r=r_k} \right. }
			\left(V_x(r)\ex^{-nhr} \right) \\
     	& = & \lim_{k\to\infty}(v_x(r_k)\ex^{-nhr_k}-nhV_x(r_k)\ex^{-nhr_k}) \\
     	& = & m(x) - nhl(x).
	\end{eqnarray*}
	
	Finally, to show that the Margulis function is harmonic, we write it using the
	visual measures :
	$$
	m(x) = \int_{S_xM}\tau(u)du = \int_{\dM}\tau(\phi_x^{-1}(\xi))d\lambda_x(\xi) 
	$$
	Choosing a fixed point $x_0\in M$, we get
	$$
	m(x) = \int_{\dM} \tau(\phi_{x_0}^{-1}(\xi))
		\ex^{-nh(b_\xi(x)-b_\xi(x_0))}d\lambda_{x_0}(\xi)
	$$
	and we are done, because $\ex^{-nh(b_\xi(x)}$  is harmonic.
\end{proof}

%
%
%
%

\small
\bibliographystyle{plain}

\bibliography{pc-as-asympt-harm-mfds}

\vspace{10mm}

  \begin{flushleft}
    Philippe Castillon \\
    \textsc{i3m} (\textsc{u.m.r. c.n.r.s.} 5149) \\
    D\'ept. des Sciences Math\'ematiques, CC 51 \\
    Univ. Montpellier II \\
    34095 \textsc{Montpellier} Cedex 5, France \\
    \texttt{philippe.castillon\symbol{64}univ-montp2.fr}
  \end{flushleft}

  \begin{flushleft}
    Andrea Sambusetti \\
    Istituto di Matematica G. Castelnuovo \\
    Universit\`a "La Sapienza" di Roma \\
    P.le Aldo Moro 5 \\
    00185 Roma (Italy) \\
    \texttt{sambuset\symbol{64}mat.uniroma1.it}
  \end{flushleft}

\end{document}